\DeclareMathAlphabet{\mathbbold}{U}{bbold}{m}{n}
\newcommand{\argmin}{{\mathrm{argmin}}}
\newcommand*{\argminOp}{\operatornamewithlimits{argmin}\limits}
\newcommand*{\minOp}{\operatornamewithlimits{min}\limits}
\newcommand*{\maxOp}{\operatornamewithlimits{max}\limits}
\newcommand*{\sumOp}{\operatornamewithlimits{\sum}\limits}
\newcommand{\tr}{{\mathsf{T}}}
\newcommand{\zero}{\mathbf{0}}
\newcommand{\one}{\mathbf{1}}
\newcommand{\eye}{\mathbb{I}}
\newcommand{\eye}{\mathbf{I}}
\newcommand{\vc}[1]{{ \mathrm{#1} }}
\newcommand{\mx}[1]{{ \mathrm{#1} }}
\newcommand{\conv}{\mathrm{conv}} 
\newcommand{\diag}{\mathrm{diag}}
\newcommand{\Diag}{\mathrm{Diag}}
\newcommand{\inner}[2]{{ \langle {#1,#2} \rangle}}
\newcommand{\Dscr}{{\mathscr{D}}}
\newcommand{\Lscr}{{\mathscr{L}}}
\newcommand{\Acal}{{\mathcal{A}}}
\newcommand{\Ccal}{{\mathcal{C}}}
\newcommand{\Ical}{{\mathcal{I}}}
\newcommand{\Jcal}{{\mathcal{J}}}
\newcommand{\Kcal}{{\mathcal{K}}}
\newcommand{\Lcal}{{\mathcal{L}}}
\newcommand{\Rcal}{{\mathcal{R}}}
\newcommand{\Ucal}{{\mathcal{U}}}
\newcommand{\Ycal}{{\mathcal{Y}}}
\newcommand{\Zcal}{{\mathcal{Z}}}
\newcommand{\Nbb}{{\mathbb{N}}}
\newcommand{\Rbb}{{\mathbb{R}}}
\newcommand{\Sbb}{{\mathbb{S}}}
\newcommand{\RR}{\mathbb{R}}
\theoremstyle{plain}
\newtheorem{theorem}{Theorem}
\newtheorem{corollary}[theorem]{Corollary}
\newtheorem{remark}{Remark}
\newtheorem*{problem*}{Problem}
\newlist{todolist}{itemize}{2}
\setlist[todolist]{label=$\square$}
\newcommand{\nS}{{n_{\text{s}}}}
\newcommand{\nT}{{n_{\text{T}}}}
\newcommand{\xib}{\boldsymbol{\xi}}
\newcommand{\thetab}{\boldsymbol{\theta}}
\newcommand{\xibl}{\boldsymbol{\xi}_{\lambda}}
\newcommand{\thetabl}{\boldsymbol{\theta}_{\lambda}}
\newcommand{\varphil}{{\varphi}_{\lambda}}
\newcommand{\varphilt}{{\varphi}_{\lambda,\tau}}
\newcommand{\etabl}{\boldsymbol{\eta}_{\lambda}}
\newcommand{\varphione}{{\varphi}^{(1)}}
\newcommand{\varphitwo}{{\varphi}^{(2)}}
\newcommand{\xione}{{\xi}^{(1)}}
\newcommand{\xitwo}{{\xi}^{(2)}}
\newcommand{\xibone}{{\xib}^{(1)}}
\newcommand{\xibtwo}{{\xib}^{(2)}}
\newcommand{\thetaone}{{\theta}^{(1)}}
\newcommand{\thetatwo}{{\theta}^{(2)}}
\newcommand{\thetabone}{\boldsymbol{\theta}^{(1)}}
\newcommand{\thetabtwo}{\boldsymbol{\theta}^{(2)}}
\newcommand{\thetablone}{\boldsymbol{\theta}^{(1)}_{\lambda}}
\newcommand{\thetabltwo}{\boldsymbol{\theta}^{(2)}_{\lambda}}
\newcommand{\xiblone}{{\xib}^{(1)}_{\lambda}}
\newcommand{\xibltwo}{{\xib}^{(2)}_{\lambda}}
\newcommand{\varphilone}{{\varphi}_{\lambda}^{(1)}}
\newcommand{\varphiltwo}{{\varphi}_{\lambda}^{(2)}}
\newcommand{\varphiltone}{{\varphi}_{\lambda,\tau}^{(1)}}
\newcommand{\varphilttwo}{{\varphi}_{\lambda,\tau}^{(2)}}
\newcommand{\Rbbp}{\mathbb{R}_{
		\scalebox{0.5}{\text{$\ge 0$}}
}}
\newcommand{\Zbbp}{\mathbb{Z}_{
		\scalebox{0.5}{\text{$\ge 0$}}
}}
\title{\LARGE \bf
Convex Nonparametric Formulation for Identification of Gradient Flows}
\author{Mohammad~Khosravi\thanks{Corresponding author}~ and~ Roy~S.~Smith%
	\thanks{This research project is part of the Swiss Competence Center for Energy
		Research SCCER FEEB\&D of the Swiss Innovation Agency Innosuisse.}
	\ \thanks{The authors are with Automatic Control Lab,  ETH Zurich, Switzerland\\
	 {\tt\small  \{khosravm,rsmith\}@control.ee.ethz.ch}}
}
\begin{document}
\maketitle
\thispagestyle{empty}
\pagestyle{empty}
\begin{abstract}
In this paper, we develop a nonparametric system identification method for the nonlinear gradient-flow dynamics. In these systems, the vector field is the gradient field of a potential energy function. This fundamental fact about the dynamics of system plays the role of a structural prior knowledge as well as a constraint in the proposed identification method. While the nature of the identification problem is an estimation in the space of functions, we derive an equivalent finite dimensional formulation, which is a convex optimization in form of a quadratic program. This gives scalability of the problem and provides the opportunity for utilizing recently developed large-scale optimization solvers. The central idea in the proposed method is representing the energy function as a difference of two convex functions and estimating these convex functions jointly. Based on necessary and sufficient conditions for function convexity, the identification problem is formulated, and then, the existence, uniqueness and smoothness of the solution is addressed. We also illustrate the method numerically for a demonstrative example.	
\end{abstract}

\section{Introduction}\label{sec:int}
Nonlinear dynamics are ubiquitous in nature and widely used for modeling various phenomena in physics, chemistry, biology and other fields of science and engineering \cite{strogatz2001nonlinear, chen2018neural}. These models are either derived from first principles or by means of fitting and estimation methods. The latter employ techniques in optimization, statistical learning theory and system identification for deriving the model from the available measurement data. Meanwhile, in many cases modeling is beyond fitting a nonlinear dynamics to the observation data. We may additionally need to incorporate specific properties into the model, which are inherent in the nature of system. These properties includes stability, passivity, positivity or other possible characteristics of the system.  For the case of linear dynamics, many of these properties are already taken into account \cite{pillonetto2014kernel,goethals2003identification,khosravi2019positive}.  

For the nonlinear systems, a similar line of research has received extensive attention in the past decade \cite{sattar2020non,singh2019learning,kaiser2018sparse}. For example,  
identifying stabilizable non-autonomous dynamics is investigated in \cite{sattar2020non,singh2019learning}, and learning the dynamics subject to sparsity of the dynamic modes of system is discussed in \cite{kaiser2018sparse}.  
For the purpose of imitation learning, the dynamics modeled in \cite{calinon2010learning} based on Gaussian mixtures and hidden Markov models. 
Meanwhile, a similar approach is employed in \cite{khansari2011learning} with a global stability guarantee.
In \cite{khansari2017learning},  a convex quadratic potential energy as well as a linear dissipative field are considered with respect to each data point, and  the dynamics are modeled as a functional weighted sum of the gradient forces and the dissipative fields.
Also, in \cite{ijspeert2013dynamical}, the dynamics are modeled as weakly nonlinear differential equations which have a linear part for capturing the baseline behavior together with more complicated coupling dynamics for considering more complex phenomena.

An interesting class of nonlinear dynamics are {\em  gradient-flows}, also known as {\em curl-free vector fields}.
In physics, these vector fields are  called {\em conservative forces} with classical examples being electric and gravitational forces.
The gradient flows are defined as the 
negative of gradient of a potential energy function.
This property of gradient flow can be used as a structural prior knowledge as well as a constraint in the identification problem. 
In \cite{sindhwani2018learning}, a learning method is introduced based on the notion of vector-valued kernels which might be suitable for learning gradient flow of convex energy functions. However, the arguments in \cite{sindhwani2018learning} do not provide concise theoretical guarantees. 

Inspired by recent progresses in shape-constrained and convex regression \cite{mazumder2019computational}, we introduce a  nonparametric identification method for the gradient-flow dynamics.
The introduced identification problem is originally an estimation problem in the space of functions, i.e., it is a minimization of fitting or prediction error over the hypothesis space of  convex functions. 
Meanwhile, we derive an equivalent finite dimensional convex optimization problem. 
For the sake of more transparent discussion and ease of notation, in Section \ref{sec:cvx_energy}, first analyzes the case of convex energy functions. The results are then extended to the cases where the knowledge of the equilibrium is available, the energy functions are strongly convex as well as (strongly) concave, and subsequently the case of general energy functions in Section \ref{sec:noncvx_energy}.
Finally, in Section \ref{sec:numerics}, the method is numerically illustrated on a demonstrative
example.	

\section{Notations and Preliminaries}
The set of natural numbers, the set of non-negative integers, the set of real numbers, $n$-dimensional Euclidean space and the space of $n$ by $m$ real matrices are denoted by $\Nbb$, $\Zbbp$,  $\Rbb$, $\Rbb^n$, and $\Rbb^{n\times m}$ respectively.
The identity matrix and zero vector in the Euclidean space are denoted by $\eye$ and $\zero$ respectively.
The set of symmetric positive definite matrices in $\Rbb^{n\times n}$ is denoted by $\Sbb_{++}^n$.
For any pair of symmetric matrices $\mx{X},\mx{Y}\in\RR^{n\times n}$, we write $\mx{X}\succeq \mx{Y}$ if $\mx{X}-\mx{Y}\in\Sbb_{++}^n$.
For a set $\Acal\subseteq\Rbb^n$, the convex hull of $\Acal$ is denoted by $\conv(\Acal)$. 
The Euclidean norm and the inner product on $\Rbb^n$ are respectively denoted by $\|\cdot\|$ and $\inner{\cdot}{\cdot}$.
For a function $f$, $\nabla\! f$ and $\nabla^2\!f$ are the gradient and Hessian of $f$ respectively.
For a convex function $\varphi:\Ucal\subseteq\Rbb^n\to\Rbb$, the subgradient or subderivative of $f$ at $\vc{x}\in\Ucal$ is denoted by $\partial f(\vc{x})$ and defined as the set of vectors $\xi\in\Rbb^n$ satisfying the inequality 
$\varphi(\vc{y})-\varphi(\vc{x})\ge \inner{\xi}{\vc{y}-\vc{x}},\quad \forall \vc{y}\in\Ucal$. 
Let $\Ycal$ be a set and $\Ccal$ be a subset of $\Ycal$. The indicator function of $\Ccal$, denoted by $\Ical_{\Ccal}$, is defined as
$\Ical_{\Ccal}(y) = 0$, if $y\in\Ccal$ and  $\Ical_{\Ccal}(y) = \infty$, otherwise.
\section{Problem Statement}
Let $\Ucal$ be a simply-connected open subset of $\Rbb^n$ and $\varphi:\Ucal\to \Rbb$ be an \underline{\em unknown} function.
We call $\varphi$ the {\em potential energy} function or simply, {\em energy} function.
A conservative vector field corresponding to $\varphi$ is induced over the space, and the corresponding dynamics are defined as 
\begin{equation}\label{eqn:xdot=f(x)=-Dphi(x)}
\begin{array}{c}
\dot{\vc{x}} = f(\vc{x}):=-\nabla\! \varphi(\vc{x}).
\end{array}
\end{equation}
Starting from initial condition $\vc{x}_0\in\Ucal$ at time $t=0$, the vector field generates a trajectory which is denoted here by $\vc{x}(\cdot;\vc{x}_0)$.
Consider a set of initial points $\vc{x}_0^1,\ldots,\vc{x}_0^\nT$ and corresponding trajectories.
For any $i=1,\ldots,\nT$, let trajectory $\vc{x}(\cdot;\vc{x}_0^i)$ be sampled at time instants $0\le t^i_1<t^i_2<\cdots<t^i_{n_i}$, where $n_i\in\Nbb$. Let $\vc{x}_k^i$ denote $\vc{x}(t_k^i;\vc{x}_0^i)$ for $1\le k\le n_i$.
The time derivative of $\vc{x}(t;\vc{x}^i_0)$ at each sampling time instant can then be estimated by simply utilizing a nonlinear regression method and subsequently obtaining the derivatives numerically or analytically. Various other techniques, e.g. see \cite{wang2019robust} and the references therein are also available in the literature to estimate the derivative of the trajectory. Let these estimations be denoted by $\vc{y}_k^i$, for $1\le k\le n_i$.
Note that $\vc{y}_k^i$ is approximately equal to $f(\vc{x}_k^i)$.
Based on these samples and estimations, we can introduce a set of {\em data}, denoted by $\Dscr$, which contains data pairs $(\vc{x}_k^i,\vc{y}_k^i)$.
More precisely, $\Dscr$ is defined as $\{(\vc{x}_j,\vc{y}_j)\ |\ 1\le j\le \nS\}$, where $\nS:=\sum_{1\le i\le\nT}n_i$ and, for simplicity of notation, the superscripts are dropped. 
\begin{problem*}
Given the set of data $\Dscr$, estimate the unknown vector field $f$ in \eqref{eqn:xdot=f(x)=-Dphi(x)}.
\end{problem*}
\begin{remark}
This problem is a nonlinear system identification where {\em structural prior knowledge} is provided in form of \eqref{eqn:xdot=f(x)=-Dphi(x)}.
\end{remark}
\begin{remark}
The dynamics in \eqref{eqn:xdot=f(x)=-Dphi(x)} can be extended to the case of {\em differential inclusions} \cite{aubin2012differential}. More precisely, one may consider $\dot{\vc{x}}\in\pm\partial\varphi(\vc{x})$ or $\dot{\vc{x}}\in-(\partial\varphi_1(\vc{x})-\partial\varphi_2(\vc{x}))$, where $\varphi$, $\varphi_1$ and $\varphi_2$ are convex functions and $\partial$ is the sub-derivative operator.
\end{remark}
\begin{remark}
The dynamics in \eqref{eqn:xdot=f(x)=-Dphi(x)} can be extended to the case of {\em differential inclusions} \cite{aubin2012differential}. More precisely, one may consider $\dot{\vc{x}}\in\pm\partial\varphi(\vc{x})$ or $\dot{\vc{x}}\in-(\partial\varphi_1(\vc{x})-\partial\varphi_2(\vc{x}))$, where $\varphi$, $\varphi_1$ and $\varphi_2$ are convex functions and $\partial$ is the sub-derivative operator.
\end{remark}
	
\section{Convex Energy Functions}\label{sec:cvx_energy}
In this section, we consider the case where the energy function is convex. In the following, we relax the differentiablity assumption of energy function for the sake of generality.

Let $\Phi$ denote the set of convex functions defined over $\Rbb^n$.
Considering the data $\Dscr$,
we define the loss function for the estimation problem, denoted by $\Lcal_{\Phi,\Dscr}$, as the sum of squared errors. 
In other words, $\Lcal_{\Phi,\Dscr}:\Phi\times\Rbb^{\nS n}\to \Rbb$ is a function such that for any given convex function $\varphi \in \Phi$ and vectors $\xi_1,\ldots,\xi_{\nS}$, the value of $\Lscr_{\Phi,\Dscr}(\varphi,\xib)$ is defined as 
\begin{equation}\label{eqn:loss_phi}
\Lcal_{\Phi,\Dscr}(\varphi,\xib) := \sum_{i=1}^{\nS}  \|\vc{y}_i+\xi_i\|^2 + \sum_{i=1}^{\nS} \Ical_{ \partial \varphi(\vc{x}_i)}(\xi_i), 
\end{equation}
where $\xib \in\Rbb^{\nS n}$ is a column vector defined as $\xib :=
\begin{bmatrix}
\xi_1^\tr & \ldots & \xi_{\nS}^\tr \end{bmatrix}^\tr$.
Note that for any $\varphi\in\Phi$, we have that $\partial \varphi(\vc{x}) \ne \emptyset$, for any $\vc{x}$, and also, $\partial \varphi(\vc{x}) = \{\nabla\!\varphi(\vc{x})\}$, when $\varphi$ is differentiable at $\vc{x}$.
Accordingly, the estimation problem is naturally defined as
\begin{equation}\label{eqn:opt_phi}
\begin{array}{cl}
\minOp_{
\xi_1,\ldots,\xi_{\nS}\in \Rbb^n, \varphi\in\Phi} &  
\sumOp_{i=1}^{\nS} \|\vc{y}_i+\xi_i\|^2,\\
\textrm{s.t.} & \xi_i\in\partial\varphi(\vc{x}_i), \quad\forall i=1,\ldots,\nS. 
\end{array}
\end{equation}
Note that optimization problem \eqref{eqn:opt_phi} is over the set $\Phi$, a cone in the space of functions which is an infinite-dimensional space. We investigate this problem and introduce a tractable approach for obtaining a solution for \eqref{eqn:opt_phi}.
\subsection{Towards Finite-Dimensional Formulation}
For any convex function $\varphi$, the following holds \cite{nesterov2018lectures} 
\begin{equation}\label{eqn:cvx_var_ineq_subgrad}
\varphi(\vc{y})-\varphi(\vc{x})\ge \inner{\xi}{\vc{y}-\vc{x}},\quad \forall \vc{x},\vc{y}\in\Rbb^n,\ \forall \xi\in\partial\varphi(\vc{x}).
\end{equation}
Motivated by this property of convex functions, we introduce the following optimization problem
\begin{equation}\label{eqn:opt_thetaxi}
\begin{array}{cl}
\minOp_{\substack{
		\xi_1,\ldots,\xi_{\nS}\in \Rbb^n\\
		\!\!\!\!\theta_1,\ldots,\theta_{\nS} \in\Rbb}} &  
	\sumOp_{i=1}^{\nS} \|\vc{y}_i+\xi_i\|^2,
\\
\textrm{s.t.} & \theta_j-\theta_i\ge \inner{\xi_i}{\vc{x}_j-\vc{x}_i}, \qquad \forall i,j\in\{1,\ldots,\nS\}. 
\end{array}
\end{equation}
Define the vector $\thetab\in\Rbb^{\nS}$ as
$ \thetab :=
\begin{bmatrix}
\theta_1 & \ldots&\theta_{\nS} \end{bmatrix}^\tr$.
Let $\Kcal$ be the feasible set in \eqref{eqn:opt_thetaxi},  
\begin{equation}
\begin{split}
\Kcal := \bigg{\{}(\thetab,\xib)\in \Rbb^{\nS} \times \Rbb^{\nS n}\ \bigg{|}\ \theta_j-\theta_i & \ge  \inner{\xi_i}{\vc{x}_j-\vc{x}_i}, 
\quad
\forall i,j\!=\!1,\ldots,\nS\bigg{\}}.
\end{split}
\end{equation}
Considering the optimization problem \eqref{eqn:opt_thetaxi}, one can define a loss function $\Lcal_{\Kcal,\Dscr}:\Rbb^{\nS} \times \Rbb^{\nS n}\to \bar{\Rbb}$ as
\begin{equation}
\Lcal_{\Kcal,\Dscr}(\thetab,\xib) := \sum_{i=1}^{\nS} \|\vc{y}_i+\xi_i\|^2 + \Ical_{\Kcal}(\thetab,\xib).
\end{equation} 
The next theorem presents the connection between optimization problems \eqref{eqn:opt_phi} and \eqref{eqn:opt_thetaxi} as well as the corresponding loss functions. 
\begin{theorem}\label{thm:correspondence_0}
{\rm (i)} Let $(\varphi,\xib)$ be a solution of \eqref{eqn:opt_phi}. Then, \eqref{eqn:opt_thetaxi} has a solution $(\thetab,\xib)$ such that 
\begin{equation}
\Lcal_{\Kcal,\Dscr}(\thetab,\xib) = \Lcal_{\Phi,\Dscr}(\varphi,\xib).
\end{equation} 
{\rm (ii)} Conversely, if $(\thetab,\xib)$ is a solution of \eqref{eqn:opt_thetaxi}, then \eqref{eqn:opt_phi} has a solution $(\varphi,\xib)$ such that 
\begin{equation}
\Lcal_{\Phi,\Dscr}(\varphi,\xib) = \Lcal_{\Kcal,\Dscr}(\thetab,\xib),
\end{equation} 
and $\xi_i\in \partial\varphi(\vc{x}_i)$, for all $i=1,\ldots,\nS$. 
\end{theorem}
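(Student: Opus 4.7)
The plan is to construct, in each direction, the missing component (the vector $\thetab$ or the function $\varphi$) and to transfer optimality using the fact that both objectives reduce to $\sum_i\|\vc{y}_i+\xi_i\|^2$ on feasible pairs.

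For part (i), given an optimal $(\varphi,\xib)$ of \eqref{eqn:opt_phi}, I would simply set $\theta_i := \varphi(\vc{x}_i)$. Then $\xi_i\in\partial\varphi(\vc{x}_i)$ together with the subgradient inequality \eqref{eqn:cvx_var_ineq_subgrad} applied at $\vc{x}=\vc{x}_i$, $\vc{y}=\vc{x}_j$ yields
\[
\theta_j-\theta_i = \varphi(\vc{x}_j)-\varphi(\vc{x}_i)\ge \inner{\xi_i}{\vc{x}_j-\vc{x}_i},
\]
so $(\thetab,\xib)\in\Kcal$ and both indicator terms vanish, giving $\Lcal_{\Kcal,\Dscr}(\thetab,\xib)=\Lcal_{\Phi,\Dscr}(\varphi,\xib)$.

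The key step is part (ii): starting from $(\thetab,\xib)\in\Kcal$, I would build a convex $\varphi$ satisfying $\varphi(\vc{x}_i)=\theta_i$ and $\xi_i\in\partial\varphi(\vc{x}_i)$ via the Legendre-style pointwise maximum
\[
\varphi(\vc{x}) := \max_{1\le j\le \nS}\bigl\{\theta_j+\inner{\xi_j}{\vc{x}-\vc{x}_j}\bigr\}.
\]
This $\varphi$ is convex as a maximum of affine functions. Swapping the roles of $i$ and $j$ in the defining inequality of $\Kcal$ gives $\theta_j+\inner{\xi_j}{\vc{x}_i-\vc{x}_j}\le\theta_i$ for every $j$; together with the $j=i$ term attaining $\theta_i$, this yields $\varphi(\vc{x}_i)=\theta_i$. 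For any $\vc{y}$, the presence of the $j=i$ term in the maximum gives $\varphi(\vc{y})\ge \theta_i+\inner{\xi_i}{\vc{y}-\vc{x}_i}=\varphi(\vc{x}_i)+\inner{\xi_i}{\vc{y}-\vc{x}_i}$, hence $\xi_i\in\partial\varphi(\vc{x}_i)$. Consequently, $(\varphi,\xib)$ is feasible for \eqref{eqn:opt_phi} with the same loss value.

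Optimality transfers in both directions by a swap argument: any strictly better competitor in one problem would, via the construction of the opposite direction, yield a strictly better feasible point in the other, contradicting the assumed optimality of the starting pair. The only substantive obstacle is the construction in part (ii)---showing that the finite system of inequalities defining $\Kcal$ admits a convex interpolant realizing each $\xi_i$ as a subgradient at $\vc{x}_i$; the max-of-affine construction resolves this cleanly, and everything else reduces to the algebraic checks above.
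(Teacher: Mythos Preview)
Your proposal is correct and follows essentially the same approach as the paper: both directions use the constructions $\theta_i:=\varphi(\vc{x}_i)$ and $\varphi(\vc{x}):=\max_j\{\theta_j+\inner{\xi_j}{\vc{x}-\vc{x}_j}\}$, and optimality is transferred by the same contradiction (``swap'') argument. The only minor difference is that the paper verifies $\xi_i\in\partial\varphi(\vc{x}_i)$ via the explicit formula $\partial\varphi(\vc{x})=\conv\{\xi_j: j\in I(\vc{x})\}$ for a max-of-affine function, whereas you check the subgradient inequality directly from the $j=i$ term in the maximum; your route is slightly more elementary but otherwise equivalent.
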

\begin{proof}
{\it Proof of {\rm (i)}:} For any $i\in\{1,\ldots,\nS\}$, define $\theta_i=\varphi(\vc{x}_i)$. From \eqref{eqn:cvx_var_ineq_subgrad}, one can easily see that $(\thetab,\xib)\in\Kcal$, and subsequently, $\Ical_{\Kcal}(\thetab,\xib) =0 $. Therefore, we have
\begin{equation}\label{eqn:thm_correspondence_eq1}
\Lcal_{\Kcal,\Dscr}(\thetab,\xib) =
\sumOp_{i=1}^{\nS} \|\vc{y}_i+\xi_i\|^2
= 
\Lcal_{\Phi,\Dscr}(\varphi,\xib).
\end{equation}
Let \eqref{eqn:opt_thetaxi} have a feasible point $(\tilde\thetab,\tilde\xib)$ such that $\Lcal_{\Kcal,\Dscr}(\tilde\thetab,\tilde\xib) <\Lcal_{\Kcal,\Dscr}(\thetab,\xib)$.
Therefore, we have
\begin{equation}\label{eqn:thm_correspondence_eq2}
\sum_{i=1}^{\nS} \|\vc{y}_i+\tilde\xi_i\|^2 
\!=\! 
\Lcal_{\Kcal,\Dscr}(\tilde\thetab,\tilde\xib) 
\!<\!
\Lcal_{\Kcal,\Dscr}(\thetab,\xib)
\!=\!
\sum_{i=1}^{\nS} \|\vc{y}_i+\xi_i\|^2.
\end{equation}
Let function $\tilde{\varphi}:\Rbb^n\to\Rbb$ be defined as
\begin{equation}\label{eqn:checkphi}
\begin{array}{c}
\tilde{\varphi}(\vc{x}) : = \max_{i=1,\ldots,\nS}\inner{\tilde\xi_i}{\vc{x}-\vc{x}_i}+\tilde\theta_i, \quad \forall \vc{x}\in\Rbb^n.
\end{array}
\end{equation}
One can easily see that $\tilde{\varphi}$ is a convex function, i.e., $\tilde{\varphi}\in\Phi$.
Define set-valued map $\tilde{I}:\Rbb^n\rightrightarrows \{1,\ldots,\nS\}$ as 
\begin{equation}\label{eqn:Imap}
\tilde{I}(\vc{x})=\bigg{\{}i\in\{1,\ldots,\nS\} \ \bigg{|}\ \tilde{\varphi}(\vc{x}) = \inner{\tilde\xi_i}{\vc{x}-\vc{x}_i} +
\tilde\theta_i, \bigg{\}}.
\end{equation} 
For any $\vc{x}\in\Rbb^n$, we know that
\cite{nesterov2018lectures} 
\begin{equation}\label{eqn:subgrad_checkphi}
\partial \tilde{\varphi}(\vc{x})= \conv\Big{\{}\tilde\xi_i\ \Big{|}\ i\in \tilde{I}(\vc{x})\Big{\}}.
\end{equation}
Since $(\tilde\thetab,\tilde\xib)\in\Kcal$,  for any $i=1,\ldots,\nS$, we have
\begin{equation}\label{eqn:theta_i_theta_j}
\tilde\theta_i
\ge 
\inner{\tilde\xi_j}{\vc{x}_i-\vc{x}_j} +
\tilde\theta_j,\qquad \forall j=1,\ldots,\nS.
\end{equation}
Therefore, from \eqref{eqn:checkphi} and \eqref{eqn:theta_i_theta_j}, one can see
\begin{equation}
\tilde{\varphi}(\vc{x}_i)\ge
\inner{\tilde\xi_i}{\vc{x}_i-\vc{x}_i} +
\tilde\theta_i= 
\tilde\theta_i \ge \max_{j=1,\ldots,\nS}\inner{\tilde\xi_j}{\vc{x}_i-\vc{x}_j}+\tilde\theta_j = \tilde{\varphi}(\vc{x}_i).
\end{equation}
Subsequently, due to \eqref{eqn:subgrad_checkphi}, one can see $\tilde{\xi_i}\in \partial \tilde{\varphi}(\vc{x}_i)$. Therefore, $(\tilde{\varphi},\tilde{\xib})$ is a feasible point for \eqref{eqn:opt_phi}. 
Subsequently, due to \eqref{eqn:thm_correspondence_eq1} and \eqref{eqn:thm_correspondence_eq2}, we have
\begin{equation}
\Lcal_{\Phi,\Dscr}(\tilde{\varphi},\tilde{\xib})= \sum_{i=1}^{\nS}  \|\vc{y}_i + \tilde{\xi}_i\|^2 
= 
\Lcal_{\Kcal,\Dscr}(\tilde\thetab,\tilde\xib)
<
\Lcal_{\Kcal,\Dscr}(\thetab,\xib) 
= \Lcal_{\Phi,\Dscr}(\varphi,\xib),
\end{equation} 
which is a contradiction and we have $\Lcal_{\Kcal,\Dscr}(\tilde\thetab,\tilde\xib) \le \Lcal_{\Kcal,\Dscr}(\thetab,\xib)$.
Therefore $(\thetab,\xib)$ is a solution of \eqref{eqn:opt_thetaxi} and the proof of part {\rm{(i)}} is concluded.\\
{\it Proof of {\rm (ii)}:}  
Let $(\thetab,\xib)$ be a solution of \eqref{eqn:opt_thetaxi}. Define $\varphi:\Rbb^n\to \Rbb$ as
\begin{equation}\label{eqn:thm_varphi_max_0}
\varphi(\vc{x}) : = \maxOp_{i=1,\ldots,\nS}\inner{\xi_i}{\vc{x}-\vc{x}_i}+\theta_i.
\end{equation}
Note that $\varphi$ is a convex function, i.e. $\varphi\in\Phi$. 
Define set-valued map $I:\Rbb^n\rightrightarrows \{1,\ldots,\nS\}$ similar to \eqref{eqn:Imap}. 
For any $\vc{x}\in\Rbb^n$, we have 
$\partial \varphi(\vc{x})
= 
\conv\{\xi_i\ |\ i\in I(\vc{x})\}$.
Since $(\thetab,\xib)\in\Kcal$, based on a similar argument to the proof of part {\rm{(ii)}}, we have that $\xi_i\in\partial \varphi(\vc{x}_i)$, for any $i=1,\ldots,\nS$. Subsequently, we have 
\begin{equation}\label{eqn:thm_correspondence_eq3}
\Lcal_{\Phi,\Dscr}(\varphi,\xib) = 
\sum_{i=1}^{\nS} \|\vc{y}_i+\xi_i\|^2
= \Lcal_{\Kcal,\Dscr}(\thetab,\xib).
\end{equation}
Now, let \eqref{eqn:opt_phi} have a feasible point $(\tilde{\varphi},\tilde{\xib})$ such that 
$\Lcal_{\Phi,\Dscr}(\tilde{\varphi},\tilde{\xib}) <\Lcal_{\Phi,\Dscr}(\varphi,\xib)$. 
For any $i=1,\ldots,\nS$, define $\tilde{\theta}_i = \tilde{\varphi}(\vc{x}_i)$. Since $\tilde{\varphi}$ is a convex function, due to \eqref{eqn:cvx_var_ineq_subgrad}, one can see that $(\tilde{\thetab},\tilde{\xib})\in \Kcal$. Therefore, we have 
\begin{equation}
\Lcal_{\Kcal,\Dscr}(\tilde\thetab,\tilde\xib) 
= \sum_{i=1}^{\nS}  \|\vc{y}_i + \tilde{\xi}_i\|^2 
= \Lcal_{\Phi,\Dscr}(\tilde{\varphi},\tilde{\xib}) 
<
\Lcal_{\Phi,\Dscr}(\varphi,\xib) =  \Lcal_{\Kcal,\Dscr}(\thetab,\xib),
\end{equation} 
which is a contradiction. This shows that $(\varphi,\xib)$ is a solution of \eqref{eqn:opt_phi}.
This concludes the proof of part {\rm{(ii)}}.
\end{proof}
\begin{theorem}\label{thm:correspondence}
Optimization problem \eqref{eqn:opt_phi} admits a solution in form of 
\begin{equation}\label{eqn:thm_varphi_max}
\varphi(\vc{x}) : = \maxOp_{i=1,\ldots,\nS}\inner{\xi_i}{\vc{x}-\vc{x}_i}+\theta_i,
\end{equation} 
where $(\thetab,\xib)$ is a solution of \eqref{eqn:opt_thetaxi}. Moreover, we have
\begin{equation}
\Lcal_{\Phi,\Dscr}(\varphi,\xib) = \Lcal_{\Kcal,\Dscr}(\thetab,\xib),
\end{equation} 
and $\xi_i\in \partial\varphi(\vc{x}_i)$, for all $i=1,\ldots,\nS$. 
\end{theorem}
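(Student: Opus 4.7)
The plan is to observe that Theorem \ref{thm:correspondence_0}(ii) is already a constructive statement, and that the function $\varphi$ exhibited in its proof (equation \eqref{eqn:thm_varphi_max_0}) is precisely of the form \eqref{eqn:thm_varphi_max}. So the present theorem is essentially a restatement of part (ii) of the previous result, packaging the explicit maximum-of-affine representation together with the equality of loss values and the subgradient condition.

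Concretely, starting from a solution $(\thetab,\xib)$ of \eqref{eqn:opt_thetaxi}, I would define $\varphi$ by \eqref{eqn:thm_varphi_max}. The function $\varphi$ is a pointwise maximum of finitely many affine functions, hence convex, so $\varphi\in\Phi$. The next step is to verify that $\xi_i\in\partial\varphi(\vc{x}_i)$ for each $i\in\{1,\ldots,\nS\}$. To that end, I would appeal to the feasibility of $(\thetab,\xib)$ in $\Kcal$: the inequalities $\theta_j-\theta_i\ge \inner{\xi_i}{\vc{x}_j-\vc{x}_i}$ imply that, for every $i$, the index $i$ itself achieves the maximum in the definition \eqref{eqn:thm_varphi_max} at $\vc{x}=\vc{x}_i$, with value $\theta_i$. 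Using the standard subdifferential rule
\begin{equation*}
\partial \varphi(\vc{x})= \conv\Big{\{}\xi_j\ \Big{|}\ j\in I(\vc{x})\Big{\}},
\end{equation*}
where $I(\vc{x})$ is the active index set, one immediately obtains $\xi_i\in\partial\varphi(\vc{x}_i)$.

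With the subgradient condition established, feasibility of $(\varphi,\xib)$ for \eqref{eqn:opt_phi} is automatic and $\Ical_{\partial\varphi(\vc{x}_i)}(\xi_i)=0$ for each $i$, so the identity $\Lcal_{\Phi,\Dscr}(\varphi,\xib) = \Lcal_{\Kcal,\Dscr}(\thetab,\xib)$ follows directly from the definitions of the two loss functions. Optimality of $(\varphi,\xib)$ for \eqref{eqn:opt_phi} is then obtained by a contradiction argument identical to the one used in the proof of Theorem \ref{thm:correspondence_0}(ii): a strictly better feasible point $(\tilde\varphi,\tilde\xib)$ in \eqref{eqn:opt_phi} would produce, via $\tilde\theta_i:=\tilde\varphi(\vc{x}_i)$ and the convex subgradient inequality \eqref{eqn:cvx_var_ineq_subgrad}, a feasible point $(\tilde\thetab,\tilde\xib)\in\Kcal$ with strictly smaller $\Lcal_{\Kcal,\Dscr}$ value, contradicting optimality of $(\thetab,\xib)$.

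The proof therefore contains no genuinely new content; the only mildly delicate step is the verification that the explicit maximum formula \eqref{eqn:thm_varphi_max} actually realizes the value $\theta_i$ at each $\vc{x}_i$ and places $\xi_i$ in the subdifferential there. That step is already carried out in the proof of Theorem \ref{thm:correspondence_0}(ii), so the cleanest presentation is to invoke that result and simply note that the constructed solution has the claimed functional form.
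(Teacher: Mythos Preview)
Your plan matches the paper's approach almost verbatim: invoke Theorem~\ref{thm:correspondence_0}(ii) and observe that the function constructed there is exactly \eqref{eqn:thm_varphi_max}. The one ingredient you omit is the existence of a minimizer $(\thetab,\xib)$ for \eqref{eqn:opt_thetaxi}; the paper's proof begins by noting that \eqref{eqn:opt_thetaxi} is a quadratic program over the non-empty polyhedral cone $\Kcal$, hence admits a solution, and only then appeals to Theorem~\ref{thm:correspondence_0}. Without this step the phrase ``admits a solution'' in the statement is not justified, so you should add a sentence to that effect.
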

\begin{proof}
	Optimization problem \eqref{eqn:opt_thetaxi} is a quadratic program. Since $\Kcal$ is a non-empty polyhedral cone, \eqref{eqn:opt_thetaxi} has a solution, denoted by $(\thetab,\xib)$. Therefore, due to Theorem \ref{thm:correspondence_0}, optimization problem \eqref{eqn:opt_phi} admits a solution in form of \eqref{eqn:thm_varphi_max}.
	The rest of theorem concludes directly from Theorem \ref{thm:correspondence_0} and the given proof.
\end{proof}	
Based on Theorem \ref{thm:correspondence}, one can solve \eqref{eqn:opt_thetaxi} instead of the main estimation problem \eqref{eqn:opt_phi} and  
introduce an estimation of the energy function as in  \eqref{eqn:thm_varphi_max}
where $(\thetab,\xib)$ is a solution of \eqref{eqn:opt_thetaxi}.
However, there are two issues to be addressed: optimization problem \eqref{eqn:opt_thetaxi} does not have a unique solution, and it is not smooth. In the following, we will address these issues.
\subsection{Uniqueness by Regularization}
One can introduce a regularized version of the optimization problem \eqref{eqn:opt_thetaxi} as
\begin{equation}\label{eqn:opt_thetaxi_reg}
\minOp_{(\thetab,\xib)\in \Rbb^{\nS} \times \Rbb^{\nS n}}   \Lcal_{\Kcal,\Dscr}(\thetab,\xib) + \lambda\ \Rcal(\thetab,\xib), 
\end{equation}
where $\Rcal: \Rbb^{\nS} \times \Rbb^{\nS n}\to \Rbbp$ is the regularization function and $\lambda\ge 0$ is the weight of regularization. Based on the next theorem, we introduce a suitable candidate for the regularization function.
\begin{theorem}\label{thm:Zcal_theta_xi}
There exist a unique $\xib^*\in\Rbb^{\nS n}$ and a closed and convex set $\Theta^*\subset\Rbb^n$ such that
\begin{equation}\label{eqn:Zcal_theta_xi}
\Zcal:=\argminOp_{(\thetab,\xib)\in \Rbb^{\nS} \times \Rbb^{\nS n}}   \Lcal_{\Kcal,\Dscr}(\thetab,\xib) = \Theta^*\times\{\xib^*\}.
\end{equation}
\end{theorem}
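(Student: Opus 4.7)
The plan is to decouple the minimization in $\xib$ from the minimization in $\thetab$ by exploiting two structural facts: on the effective domain $\Kcal$ of $\Lcal_{\Kcal,\Dscr}$ the fitting term depends only on $\xib$, and the constraints defining $\Kcal$ are linear in $(\thetab,\xib)$. Let $J(\xib):=\sum_{i=1}^{\nS}\|\vc{y}_i+\xi_i\|^2$, and let $\Xi\subseteq\Rbb^{\nS n}$ denote the image of $\Kcal$ under the projection $(\thetab,\xib)\mapsto\xib$. Since $\Kcal$ is a nonempty polyhedral cone (as observed in the proof of Theorem~\ref{thm:correspondence}), $\Xi$ is also a nonempty polyhedral, hence closed convex, cone, containing the origin.

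Next, I would study the reduced problem $\min_{\xib\in\Xi} J(\xib)$. The functional $J$ is strictly convex and coercive on $\Rbb^{\nS n}$, and $\Xi$ is closed, convex, and nonempty, so standard convex analysis gives a unique minimizer, which I call $\xib^*$. Then define
\begin{equation*}
\Theta^*:=\{\thetab\in\Rbb^{\nS}\ |\ (\thetab,\xib^*)\in\Kcal\},
\end{equation*}
i.e.\ the $\xib^*$-fiber of the polyhedron $\Kcal$. Nonemptiness follows from $\xib^*\in\Xi$, and the defining inequalities $\theta_j-\theta_i\ge\langle\xi_i^*,\vc{x}_j-\vc{x}_i\rangle$ make $\Theta^*$ a (closed, convex) polyhedron in $\Rbb^{\nS}$.

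To conclude the identity $\Zcal=\Theta^*\times\{\xib^*\}$, I would argue two inclusions. For $(\thetab,\xib^*)\in\Theta^*\times\{\xib^*\}$, feasibility yields $\Ical_{\Kcal}(\thetab,\xib^*)=0$ and hence $\Lcal_{\Kcal,\Dscr}(\thetab,\xib^*)=J(\xib^*)$, which by construction of $\xib^*$ equals the infimum of $\Lcal_{\Kcal,\Dscr}$ over $\Rbb^{\nS}\times\Rbb^{\nS n}$, so $(\thetab,\xib^*)\in\Zcal$. Conversely, any $(\thetab,\xib)\in\Zcal$ has finite loss, so $(\thetab,\xib)\in\Kcal$ and therefore $\xib\in\Xi$; optimality then gives $J(\xib)\le J(\xib^*)$, and uniqueness of the minimizer of $J$ over $\Xi$ forces $\xib=\xib^*$, whence $\thetab\in\Theta^*$.

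The only substantive step is the strict-convexity-plus-coercivity argument that delivers a unique $\xib^*$; the rest is polyhedral bookkeeping. The small subtlety to watch is that coercivity and uniqueness must be proved for $J$ on the full space (not over $\Xi$, which is merely a cone that may be unbounded), and then intersected with the closed convex set $\Xi$ — but since $J$ is a positive definite quadratic in $\xib$, both properties are immediate and pass to any nonempty closed convex subset.
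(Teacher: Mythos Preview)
Your proof is correct. Both your argument and the paper's hinge on the same two structural facts---that the quadratic cost depends only on $\xib$ and is strictly convex there, and that $\Kcal$ is a nonempty closed convex (polyhedral) set---but the organization differs. The paper argues directly on $\Zcal$: it picks two optimizers $(\thetab_1,\xib_1),(\thetab_2,\xib_2)\in\Zcal$, forms the midpoint (feasible by convexity of $\Kcal$), and uses the strong-convexity identity $\tfrac12\Jcal(\xib_1)+\tfrac12\Jcal(\xib_2)=\Jcal(\tfrac12\xib_1+\tfrac12\xib_2)+\tfrac14\|\xib_1-\xib_2\|^2$ to force $\xib_1=\xib_2$; closedness and convexity of $\Theta^*$ then follow from those of $\Zcal$ and the $\thetab$-independence of the cost. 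You instead project $\Kcal$ onto the $\xib$-coordinates to obtain a closed convex polyhedral cone $\Xi$, solve the reduced strictly convex problem $\min_{\xib\in\Xi}J(\xib)$ for the unique $\xib^*$, and then read $\Theta^*$ off as the $\xib^*$-fiber of $\Kcal$. Your route is a bit more constructive (it isolates $\xib^*$ before touching $\Zcal$) and makes the polyhedral structure of $\Theta^*$ explicit, while the paper's midpoint argument is shorter and avoids introducing the auxiliary set $\Xi$; both are standard and equally rigorous.
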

\begin{proof}
We know that $\Zcal$ is a non-empty closed set. Take $(\thetab_1,\xib_1),(\thetab_2,\xib_2) \in \Zcal \subset\Kcal$.
Since $\Kcal$ is a convex polyhedral cone, one has 
\begin{equation*}
(\thetab,\xib) := \frac{1}{2}(\thetab_1,\xib_1)+\frac{1}{2}(\thetab_2,\xib_2)\in\Kcal.
\end{equation*} 
Moreover, since $\Jcal(\xib):= \sum_{1\le i\le\nS}\|\vc{y}_i+\xi_i\|^2$ is a strongly convex function with $\nabla^2\!\Jcal =2\eye$, we have 
\begin{equation*}
\frac{1}{2}\Jcal(\xib_1) +\frac{1}{2} \Jcal(\xib_2)= \Jcal(\xib) + \frac{1}{4}\|\xib_1-\xib_2\|^2.
\end{equation*}
Accordingly, we should have $\xib_1=\xib_2$, otherwise the problem admits a solution with smaller cost. 
From this argument, we know that there exists a set $\Theta^*\subset \Rbb^n$ such that $\Zcal=\Theta^*\times\{\xib^*\}$. Since $\Zcal$ is a closed set, $\Theta^*$ is also closed. The convexity of $\Theta^*$ follows from the convexity of $\Kcal$ and the fact that the cost function does not depend on $\thetab$.
\end{proof}
Theorem \ref{thm:Zcal_theta_xi} says that the potential non-uniqueness of the solution is due to the term $\thetab$. Accordingly, we consider the regularized cost function $\Jcal_{\lambda}:\Rbb^n\times\Rbb^{\nS n}\to \Rbb$ defined as
\begin{equation}\label{eqn:Jcal_lambda}
\Jcal_{\lambda}(\thetab,\xib) 
:= 
\sum_{i=1}^{\nS}\|\vc{y}_i+\xi_i\|^2 + \lambda \|\thetab\|^2,
\qquad \forall(\thetab,\xib) \in \Rbb^n\times\Rbb^{\nS n}.
\end{equation}  
The next theorem characterizes the solution of the corresponding regularized optimization.
\begin{theorem}\label{thm:Jlambda_unique}
For any $\lambda>0$, the optimization problem 
\begin{equation}\label{eqn:Jlambda}
\minOp_{(\thetab,\xib) \in \Kcal} \Jcal_{\lambda}(\thetab,\xib),
\end{equation}
has a unique solution, denoted by $(\thetabl,\xibl)$.
Moreover, $\lim_{\lambda\to 0}(\thetabl,\xibl)$ exists and is equal to $(\thetab^*,\xib^*)$ where $\thetab^* := \argmin_{\thetab\in\Theta^*}\|\thetab\|^2$.
Also, $\lim_{\lambda\to \infty}(\thetabl,\xibl)$ exists and equals to $(\zero,\xib^\dagger)$ where $ \xib^\dagger$ is the unique solution of  $\min_{(\zero,\xib)\in\Kcal}\sum_{1\le i\le\nS}\|\vc{y}_i+\xi_i\|^2 $.
\end{theorem}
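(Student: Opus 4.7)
The plan has three clearly separated parts matching the three claims. First, for fixed $\lambda>0$, I would establish existence and uniqueness by a strong-convexity argument: the objective $\Jcal_\lambda(\thetab,\xib)=\sum_i\|\vc{y}_i+\xi_i\|^2+\lambda\|\thetab\|^2$ has Hessian $2\,\mathrm{diag}(\lambda\eye,\eye)\succeq 2\min(\lambda,1)\eye$, hence is strongly convex and coercive jointly in $(\thetab,\xib)$, and $\Kcal$ is a non-empty closed convex polyhedral cone. A continuous, coercive, strictly convex function on a closed convex set attains its infimum at a unique point, giving $(\thetabl,\xibl)$.

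Next, for the $\lambda\to 0$ limit, I would run the standard Tikhonov argument, leveraging Theorem \ref{thm:Zcal_theta_xi}. Let $J^*=\sum_i\|\vc{y}_i+\xi_i^*\|^2$. Since $(\thetab^*,\xib^*)\in\Kcal$ is feasible, optimality of $(\thetabl,\xibl)$ gives
\begin{equation*}
\sum_{i=1}^{\nS}\|\vc{y}_i+\xi_{\lambda,i}\|^2+\lambda\|\thetabl\|^2\;\le\;J^*+\lambda\|\thetab^*\|^2,
\end{equation*}
and combining this with $\sum_i\|\vc{y}_i+\xi_{\lambda,i}\|^2\ge J^*$ yields $\|\thetabl\|\le\|\thetab^*\|$ as well as $\sum_i\|\vc{y}_i+\xi_{\lambda,i}\|^2\to J^*$. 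The family $\{(\thetabl,\xibl)\}_{\lambda\in(0,1]}$ is then bounded, and any cluster point $(\bar\thetab,\bar\xib)$ as $\lambda\to 0$ lies in the closed set $\Kcal$ and attains the unregularized optimum, so by Theorem \ref{thm:Zcal_theta_xi} it belongs to $\Theta^*\times\{\xib^*\}$; in particular $\bar\xib=\xib^*$ is forced. Finally $\|\bar\thetab\|\le\|\thetab^*\|$ combined with $\bar\thetab\in\Theta^*$ and the uniqueness of the minimum-norm element of the closed convex set $\Theta^*$ (projection of $\zero$) identifies $\bar\thetab=\thetab^*$. Since every subsequential limit equals $(\thetab^*,\xib^*)$, the whole net converges.

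For $\lambda\to\infty$, I would first observe that the restricted problem $\min_{(\zero,\xib)\in\Kcal}\sum_i\|\vc{y}_i+\xi_i\|^2$ is strictly convex in $\xib$ over a closed convex set containing $\xib=\zero$, so $\xib^\dagger$ is well-defined and unique. Using $(\zero,\xib^\dagger)\in\Kcal$ as a competitor gives
\begin{equation*}
\sum_{i=1}^{\nS}\|\vc{y}_i+\xi_{\lambda,i}\|^2+\lambda\|\thetabl\|^2\;\le\;J^\dagger:=\sum_{i=1}^{\nS}\|\vc{y}_i+\xi_i^\dagger\|^2,
\end{equation*}
so $\|\thetabl\|^2\le J^\dagger/\lambda\to 0$ and $\xibl$ stays bounded. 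Closedness of $\Kcal$ together with $\thetabl\to\zero$ forces any cluster point $\bar\xib$ to satisfy $(\zero,\bar\xib)\in\Kcal$, while $\sum_i\|\vc{y}_i+\bar\xi_i\|^2\le J^\dagger$ and uniqueness of $\xib^\dagger$ yields $\bar\xib=\xib^\dagger$, so $(\thetabl,\xibl)\to(\zero,\xib^\dagger)$.

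The main obstacle I anticipate is the $\lambda\to 0$ analysis, specifically justifying that a cluster point $\bar\thetab$ cannot have norm strictly less than $\|\thetab^*\|$ while lying in $\Theta^*$ — this requires the closed-convex structure of $\Theta^*$ from Theorem \ref{thm:Zcal_theta_xi} together with the projection-uniqueness of the minimum-norm element. The other pieces are essentially bookkeeping via coercivity and closedness.
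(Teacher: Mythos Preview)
Your argument is correct and complete. The principal difference from the paper is methodological: for the two limit statements the paper confines the minimizers to a compact set $\Ccal$ (built from the bounds $\|\thetabl\|\le\|\thetab^*\|$ and a uniform bound on $\|\xibl\|$), and then invokes Berge's \emph{Maximum Theorem} to obtain upper hemicontinuity of the set-valued map $\lambda\mapsto\argmin_{\Kcal\cap\Ccal}\Jcal_\lambda$, which, combined with the singleton values, yields convergence in one stroke. You instead run the classical Tikhonov compactness argument directly---bounding, extracting subsequences, identifying every cluster point via Theorem~\ref{thm:Zcal_theta_xi} and the minimum-norm characterization of $\thetab^*$, and concluding by the ``bounded with a unique cluster point'' principle. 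Your route is more elementary and self-contained (no external hemicontinuity machinery), while the paper's route is shorter once the Maximum Theorem is available; both hinge on the same a~priori bound $\|\thetabl\|\le\|\thetab^*\|$, and your $\lambda\to\infty$ argument is in fact more explicit than the paper's one-line ``replace $\lambda$ by $\lambda^{-1}$'' remark.
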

\begin{proof}
One can easily see that $(\zero,\zero)\in\Kcal$ and $\nabla^2\!\Jcal_{\lambda}\succeq \lambda\eye$.
Therefore, \eqref{eqn:Jlambda} is an optimization problem with a strongly convex cost function and non-empty closed and convex feasible set. Therefore, \eqref{eqn:Jlambda} has a unique solution. Similarly, since $\Theta^*$ is non-empty, closed and convex,
$\thetab^* := \argmin_{\thetab\in\Theta^*}\|\thetab\|^2$ is well-defined and exists uniquely.
From the definition of $(\thetab^*,\xib^*)$ and  $(\thetabl,\xibl)$, one can easily see that for any $\lambda>0$, we have 
\begin{equation*}
\begin{array}{rcl}
\sumOp_{i=1}^\nS\|\vc{y}_i+\xi_{i}^*\|^2 
+ 
\lambda\|\thetabl\|^2
&\le &
\sumOp_{i=1}^\nS\|\vc{y}_i+\xi_{\lambda,i}\|^2 
+
\lambda\|\thetabl\|^2
\\ 
&\le&
\sumOp_{i=1}^\nS\|\vc{y}_i+\xi_{i}^*\|^2 
+
\lambda\|\thetab^*\|^2,
\end{array}
\end{equation*}
and subsequently, it holds that $\|\thetabl\|\le\|\thetab^*\|$.
Similarly, since $(\zero,\zero)\in\Kcal$, one can  see that $\|\xibl\|^2\le 4 \sum_{i=1}^\nS\|\vc{y}_i\|^2$
and
$\|\xib^*\|^2\le 4 \sum_{i=1}^\nS\|\vc{y}_i\|^2$.
Now, define set $\Ccal\subset\Rbb^{\nS}\times\Rbb^{\nS n}$ as
\begin{equation}
\Ccal:=\bigg{\{}(\thetab,\xib)\ \bigg{|}\ \|\thetab\|\le\|\thetab^*\|,
\
\|\xib\|\le 2\Big{(}\sum_{i=1}^\nS\|\vc{y}_i\|^2\Big{)}^{\frac{1}{2}}\bigg{\}},
\end{equation}
which is a compact and convex set.
Define $\Zcal_0$ and $\Zcal_{\lambda}$ as
\begin{equation}
\Zcal_0:=\argminOp_{(\thetab,\xib)\in\Kcal\cap\Ccal}\Jcal(\thetab,\xib),
\end{equation}
and
\begin{equation}
\Zcal_{\lambda}:=\argminOp_{(\thetab,\xib)\in\Kcal\cap\Ccal}\Jcal_{\lambda}(\thetab,\xib),
\end{equation}
respectively.
We know that $\Kcal\cap\Ccal$ is a compact set and $\Jcal_{\lambda}(\thetab,\xib)$ is a continuous function with respect to $(\thetab,\xib,\lambda)$. Therefore, due to {\em Maximum Theorem} \cite{Infinitedimensionalanalysis2006}, we know that the set-valued map $\lambda\mapsto\Zcal_{\lambda}$ is upper hemicontinuous with non-empty and compact values. Moreover, one has $\Zcal_0=\{(\thetab^*,\xib^*)\}$ and $\Zcal_{\lambda}=\{(\thetabl,\xibl)\}$. Subsequently, from the upper hemicontinuity of the map $\lambda\mapsto\Zcal_{\lambda}$, we have $\lim_{\lambda\to 0}(\thetabl,\xibl) = (\thetab^*,\xib^*)$. 
Replacing $\lambda$ with $\lambda^{-1}$ and repeating same steps of the proof, one can show the last part of the theorem similarly. 
\end{proof}	
Given $\lambda>0$, we can define our estimator as following
\begin{equation}\label{eqn:varphi_max_lambda}
\varphil(\vc{x}) : = \maxOp_{i=1,\ldots,\nS}\inner{\xi_{\lambda,i}}{\vc{x}-\vc{x}_i}+\theta_{\lambda,i},
\end{equation}
where $(\thetabl,\xibl)$ is the unique solution of \eqref{eqn:Jlambda}.
\begin{remark}
In addition to inducing the uniqueness of solution, the regularization improves the numerical stability 
and the robustness  with respect to noise. 
\end{remark}
\begin{remark}
If further regularization is required for improving the performance of the estimation, we can use a Tikhonov regularization \cite{tikhonov1977solutions} by defining 
$\Rcal(\thetab,\xib) :=\|(\thetab,\xib)\|^2$.
In this case, $\xib$ is also regularized and pushed towards the origin which might be not desirable.
\end{remark}
\subsection{Smoothing the Estimator} 
The smooth version of \eqref{eqn:varphi_max_lambda} is defined as
\begin{equation}\label{eqn:varphi_max_lambda_tau}
\varphilt(\vc{x})
=
\tau \ln
\bigg{(}
\frac{1}{\nS}
\sum_{1\le i\le\nS}
\exp\big{(}\frac{1}{\tau}[\inner{\xi_{\lambda,i}}{\vc{x}-\vc{x}_i}
+
\theta_{\lambda,i}]
\big{)}
\bigg{)},
\end{equation}
where $\tau$ is a positive real scalar \cite{mazumder2019computational}.
\begin{theorem}[\cite{nesterov2018lectures}]
Let the {\em log-sum-exp} function $\ell:\Rbb^n\to\Rbb$ be defined as 
\begin{equation}
\ell(\vc{x}) := \ln(\sum_{i=1}^{n}e^{x_i}),\quad \forall \vc{x}=(x_1,\ldots,x_n)\in\Rbb^n.
\end{equation}	
This function is an analytical and convex function.
The gradient and Hessian of $\ell$ are  
\begin{equation}
\nabla\! \ell(\vc{x}) = \frac{1}{\one^\tr \vc{z}}\vc{z},
\end{equation}
and
\begin{equation}
\nabla^2\! \ell(\vc{x}) = 
\frac{1}{\one^\tr \vc{z}}\Diag(\vc{z}) - 
\frac{1}{(\one^\tr\vc{z})^2}\vc{z}\vc{z}^\tr\,,
\end{equation}
where $ \vc{z} = [e^{x_1},\ldots,e^{x_n}]^\tr$. Moreover, the following holds
\begin{equation}
\begin{array}{c}
\max_{1\le i\le n} x_i \le \ell(\vc{x}) \le 
 \max_{1\le i\le n} x_i + \ln n, \quad  \forall\vc{x}\in\Rbb^n.
\end{array}
\end{equation}
\end{theorem}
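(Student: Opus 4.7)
The plan is to verify each of the four assertions about $\ell$ separately by a combination of direct differential calculus and a standard probabilistic reading of the Hessian.

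\textbf{Analyticity.} Each coordinate map $\vc{x}\mapsto e^{x_i}$ is real-analytic on $\Rbb^n$, so the sum $\sum_i e^{x_i}$ is real-analytic and strictly positive. Since $\ln$ is real-analytic on $(0,\infty)$, the composition $\ell = \ln \circ\,\bigl(\sum_i e^{(\cdot)_i}\bigr)$ is real-analytic on $\Rbb^n$.

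\textbf{Gradient.} I would apply the chain rule directly: $\partial\ell/\partial x_j = e^{x_j}/\sum_i e^{x_i} = z_j/(\one^\tr\vc{z})$, which assembles into the stated vector form $\nabla\ell(\vc{x}) = \vc{z}/(\one^\tr\vc{z})$.

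\textbf{Hessian and convexity.} I would differentiate once more via the quotient rule, obtaining the stated expression $\nabla^2 \ell(\vc{x}) = \tfrac{1}{\one^\tr\vc{z}}\Diag(\vc{z}) - \tfrac{1}{(\one^\tr\vc{z})^2}\vc{z}\vc{z}^\tr$. The cleanest way to then read off convexity is to introduce the probability vector $\vc{p} := \vc{z}/(\one^\tr\vc{z})$, so that $\nabla^2\ell(\vc{x}) = \Diag(\vc{p}) - \vc{p}\vc{p}^\tr$. For any $\vc{v}\in\Rbb^n$, one has $\vc{v}^\tr \nabla^2\ell(\vc{x})\vc{v} = \sum_i p_i v_i^2 - (\sum_i p_i v_i)^2$, which is the variance of $\vc{v}$ under the discrete distribution $\vc{p}$, hence nonnegative by Cauchy--Schwarz (or Jensen). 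Since $\ell\in C^2$ and its Hessian is positive semidefinite everywhere on $\Rbb^n$, $\ell$ is convex.

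\textbf{Bounds.} Setting $M := \max_{1\le i\le n} x_i$, the sandwich $e^M \le \sum_{i=1}^n e^{x_i} \le n\,e^M$ is immediate from $e^{x_i}>0$ and monotonicity of $\exp$, and taking logarithms yields $M \le \ell(\vc{x}) \le M + \ln n$.

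The only step with any bookkeeping is the Hessian calculation, which is a careful application of the quotient rule. The substantive content of the proof is the recognition that $\Diag(\vc{p}) - \vc{p}\vc{p}^\tr$ is the covariance matrix of the standard basis vectors weighted by $\vc{p}$; once framed this way, positive semidefiniteness (and hence convexity of $\ell$) is transparent, and no further obstacle remains.
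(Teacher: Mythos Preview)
Your proof is correct and complete. However, there is nothing to compare it against: the paper does not prove this theorem at all but merely quotes it from \cite{nesterov2018lectures}. Your argument is in fact the standard one found in Nesterov's book, so in that sense it matches the cited source; the variance interpretation of $\Diag(\vc{p})-\vc{p}\vc{p}^\tr$ is a nice touch that makes the positive semidefiniteness transparent.
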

This function is used to define a smooth approximation to $\varphil$ in \eqref{eqn:varphi_max_lambda}.
More precisely, let $\Xi_{\lambda}$ be the matrix defined as 
\begin{equation}
\Xi_{\lambda}:=
\begin{bmatrix}
\xi_{\lambda,1} &\xi_{\lambda,2} & \ldots & \xi_{\lambda,\nS}
\end{bmatrix} \in \Rbb^{n\times\nS},
\end{equation}
and, for $i=1,\ldots,\nS$, define $\eta_{\lambda,i}$ as $\eta_{\lambda,i}:= \theta_i-\inner{\xi_{\lambda,i}}{\vc{x}_i}$ and subsequently, let $\etabl$ be the vector defined as 
\begin{equation*}
\etabl := \begin{bmatrix}
\eta_{\lambda,1}&\ldots&\eta_{\lambda,\nS}
\end{bmatrix}^\tr.
\end{equation*}
Subsequently, one can see that
\begin{equation}
\varphilt(\vc{x}) = \tau\ell\Big{(}\frac{1}{\tau}\big{(}\Xi_{\lambda}^\tr\vc{x}+\etabl\big{)}\Big{)}
-
\tau\ln \nS.
\end{equation}
The next corollary motivates the use of $\varphilt$ as a smooth approximant to $\varphil$.
\begin{corollary}\label{cor:log-sum-exp-extended-to-smooth}
For any $\tau>0$, the function $\varphilt$, defined in \eqref{eqn:varphi_max_lambda_tau}, is a convex and analytical function. Moreover, we have	
\begin{equation}
\nabla \varphilt(\vc{x}) = \frac{\Xi_{\lambda}\vc{z}}{\one^\tr \vc{z}},
\end{equation}
and
\begin{equation}
\nabla^2 \varphilt(\vc{x}) = \frac{1}{\one^\tr \vc{z}} \Xi_{\lambda}\diag(\vc{z})\Xi_{\lambda}^\tr - \frac{1}{(\one^\tr\vc{z})^2} \Xi_{\lambda}\vc{z}\vc{z}^\tr\Xi_{\lambda}^\tr,
\end{equation}
where
\begin{equation}
\vc{z} = 
\begin{bmatrix}
\exp\big{(}\frac{1}{\tau}[\inner{\xi_{\lambda,1}}{\vc{x}-\vc{x}_1}
+
\theta_{\lambda,1}]\big{)}
\\\vdots\\
\exp\big{(}\frac{1}{\tau}[\inner{\xi_{\lambda,\nS}}{\vc{x}-\vc{x}_{\nS}}
+
\theta_{\lambda,\nS}]\big{)}
\end{bmatrix}.
\end{equation}
Also, we have the following inequality
\begin{equation}\label{eqn:philt_tight}
\varphil(\vc{x})-
\tau\ln \nS \le \varphilt(\vc{x}) \le 
\varphil(\vc{x}), \quad \forall\vc{x}\in\Rbb^n.
\end{equation}
\end{corollary}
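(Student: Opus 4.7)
The plan is to regard $\varphilt$ as the composition of the log-sum-exp map $\ell$ with an affine map and a scaling, and then to obtain every item in the corollary by applying the quoted theorem together with the chain rule.

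First I would rewrite the definition of $\varphilt$ in the compact form already presented in the excerpt,
\begin{equation*}
\varphilt(\vc{x}) \;=\; \tau\,\ell\!\Big(g(\vc{x})\Big) \;-\; \tau\ln \nS,\qquad g(\vc{x}) := \tfrac{1}{\tau}\bigl(\Xi_{\lambda}^{\tr}\vc{x} + \etabl\bigr),
\end{equation*}
observing that $g:\Rbb^{n}\to\Rbb^{\nS}$ is affine with Jacobian $\tfrac{1}{\tau}\Xi_{\lambda}^{\tr}$. Analyticity and convexity of $\varphilt$ then follow immediately from the analyticity and convexity of $\ell$ (as stated in the quoted theorem) and from the fact that both properties are preserved under composition with an affine map and by addition of constants and multiplication by the positive scalar $\tau$.

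Next I would compute the gradient and Hessian by the chain rule. Setting $\vc{z}_i := \exp(g(\vc{x})_i) = \exp\bigl(\tfrac{1}{\tau}[\inner{\xi_{\lambda,i}}{\vc{x}-\vc{x}_i}+\theta_{\lambda,i}]\bigr)$, the quoted formulas for $\nabla\ell$ and $\nabla^{2}\ell$ at the point $g(\vc{x})$ give
\begin{equation*}
\nabla\varphilt(\vc{x}) \;=\; \tau\cdot\tfrac{1}{\tau}\,\Xi_{\lambda}\,\nabla\ell(g(\vc{x})) \;=\; \frac{\Xi_{\lambda}\vc{z}}{\one^{\tr}\vc{z}},
\end{equation*}
and, propagating the Jacobian factors on both sides,
\begin{equation*}
\nabla^{2}\varphilt(\vc{x}) \;=\; \tau\cdot\tfrac{1}{\tau^{2}}\,\Xi_{\lambda}\,\nabla^{2}\ell(g(\vc{x}))\,\Xi_{\lambda}^{\tr},
\end{equation*}
which after substituting the theorem's formula for $\nabla^{2}\ell$ yields the expression claimed in the corollary (up to the explicit $\tfrac{1}{\tau}$ prefactor).

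Finally, the two-sided bound \eqref{eqn:philt_tight} follows from the max-vs.-log-sum-exp estimate in the quoted theorem applied at the vector $g(\vc{x})$: we have $\max_{i} g(\vc{x})_{i}\le \ell(g(\vc{x}))\le \max_{i} g(\vc{x})_{i}+\ln\nS$, and the key observation is that $\tau\max_{i}g(\vc{x})_{i} = \max_{i}[\inner{\xi_{\lambda,i}}{\vc{x}-\vc{x}_i}+\theta_{\lambda,i}] = \varphil(\vc{x})$. Multiplying the two-sided inequality by $\tau>0$ and subtracting $\tau\ln\nS$ then delivers $\varphil(\vc{x})-\tau\ln\nS\le \varphilt(\vc{x})\le \varphil(\vc{x})$.

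No single step is really an obstacle here: the whole argument is a careful bookkeeping exercise on top of the already-cited theorem. The only place one has to pay attention is consistently tracking the $\tfrac{1}{\tau}$ factors coming from the Jacobian of $g$ against the outer factor of $\tau$ in the definition of $\varphilt$, so that the gradient comes out $\tau$-free while the Hessian carries a $\tfrac{1}{\tau}$.
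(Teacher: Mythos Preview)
Your proposal is correct and matches the paper's approach: the paper states the result as an immediate corollary of the cited log-sum-exp theorem without giving a separate proof, and your argument---writing $\varphilt = \tau\,\ell\circ g - \tau\ln\nS$ with $g$ affine and applying the chain rule together with the theorem's bounds---is exactly the intended derivation. Your parenthetical remark about the $\tfrac{1}{\tau}$ prefactor in the Hessian is also right: the chain-rule computation produces $\nabla^{2}\varphilt(\vc{x}) = \tfrac{1}{\tau}\,\Xi_{\lambda}\,\nabla^{2}\ell(g(\vc{x}))\,\Xi_{\lambda}^{\tr}$, so the displayed Hessian formula in the corollary is missing that factor.
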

\begin{corollary}\label{cor:tightness-smooth}
By taking $\tau$ small enough, $\varphilt$ in a uniform approximant of $\varphil$. 
More precisely, let $\epsilon$ be an arbitrary positive real scalar and let $\tau < \frac{\epsilon}{\ln \nS}$. Then \eqref{eqn:philt_tight} shows that $|\varphil(\vc{x})-
\varphilt(\vc{x})|<\epsilon$, for any $\vc{x}\in\Rbb^n$.
\end{corollary}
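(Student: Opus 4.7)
The plan is to directly invoke the two-sided sandwich inequality \eqref{eqn:philt_tight} established in Corollary~\ref{cor:log-sum-exp-extended-to-smooth}, which already does essentially all of the work. Since that corollary gives
\[
\varphil(\vc{x})-\tau\ln\nS \;\le\; \varphilt(\vc{x}) \;\le\; \varphil(\vc{x}), \qquad \forall\vc{x}\in\Rbb^n,
\]
the first step is simply to rearrange this into the equivalent pointwise bound
\[
0 \;\le\; \varphil(\vc{x})-\varphilt(\vc{x}) \;\le\; \tau\ln\nS,
\]
which immediately yields $|\varphil(\vc{x})-\varphilt(\vc{x})| \le \tau\ln\nS$ uniformly in $\vc{x}$.

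The second step is to plug in the hypothesis on $\tau$. Given any $\epsilon>0$, if $\tau < \epsilon/\ln\nS$, then multiplying by $\ln\nS$ (which is strictly positive as soon as $\nS\ge 2$, the only nontrivial case) gives $\tau\ln\nS < \epsilon$, so
\[
|\varphil(\vc{x})-\varphilt(\vc{x})| \;\le\; \tau\ln\nS \;<\; \epsilon, \qquad \forall\vc{x}\in\Rbb^n,
\]
which is exactly the claimed uniform approximation statement.

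There is no serious obstacle here: the heavy lifting is done by the classical log-sum-exp bound $\max_i x_i \le \ln\sum_i e^{x_i} \le \max_i x_i + \ln n$, which has already been translated into \eqref{eqn:philt_tight}. The only minor point worth noting in the write-up is that the bound on $|\varphil(\vc{x})-\varphilt(\vc{x})|$ is independent of $\vc{x}$, so convergence is in the supremum norm on all of $\Rbb^n$, not merely pointwise or on compact sets; this is what justifies calling $\varphilt$ a \emph{uniform} approximant of $\varphil$.
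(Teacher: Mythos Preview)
Your argument is correct and is essentially the same as the paper's: the corollary is an immediate consequence of the sandwich inequality \eqref{eqn:philt_tight}, and the paper does not even supply a separate proof beyond pointing to that inequality. Your rearrangement and substitution of $\tau<\epsilon/\ln\nS$ are exactly what is intended.
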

\subsection{Further Extensions}\label{ssec:extentions}
We now introduce  extensions of the proposed estimation strategy to  other settings. 
We briefly present how the main estimation problem \eqref{eqn:opt_phi} and its finite-dimensional version \eqref{eqn:opt_thetaxi} are adapted. The regularization and the smoothing procedures follow the same lines as before.
\subsubsection{Concave Energy Functions} We know that $\psi$ is a concave function if and only if $\varphi:=-\psi$ is a convex function. Accordingly, the estimation problem \eqref{eqn:opt_phi} is modified to the following optimization problem
\begin{equation}\label{eqn:opt_phi_concave}
\begin{array}{cl}
\minOp_{
	\xi_1,\ldots,\xi_{\nS}\in \Rbb^n,\ \varphi\in\Phi} &  \sumOp_{i=1}^{\nS} \|\vc{y}_i-\xi_i\|^2,\\
\textrm{s.t.} & \xi_i\in\partial\varphi(\vc{x}_i), \quad\forall i=1,\ldots,\nS. 
\end{array}
\end{equation} 
Analogously to optimization problem \eqref{eqn:opt_thetaxi}, 
this leads to the following finite dimensional problem
\begin{equation}\label{eqn:opt_thetaxi_concave}\!\!
\begin{array}{cl}
\minOp_{\substack{
		\xi_1,\ldots,\xi_{\nS}\in \Rbb^n\!\!\\ \!\!\!\theta_1,\ldots,\theta_{\nS} \in\Rbb\!\!}} &  \sumOp_{i=1}^{\nS} \|\vc{y}_i-\xi_i\|^2,
\\
\textrm{s.t.} & \theta_j-\theta_i\ge \inner{\xi_i}{\vc{x}_j-\vc{x}_i}, \quad \forall  i,j\in\{1,\ldots,\nS\}. 
\end{array}
\end{equation}
\subsubsection{Strongly Convex Energy Functions} 
For $\mu>0$, the convex function $\varphi$ is said to be $\mu$-strongly convex if for any $\vc{x},\vc{y}\in\Rbb^n$, we have
\begin{equation*}\label{eqn:cvx_var_ineq_subgrad_strongly_cvx}
\varphi(\vc{y})-\varphi(\vc{x})\ge \inner{\xi}{\vc{y}-\vc{x}}+\frac{\mu}{2}\|\vc{x}-\vc{y}\|^2,\quad \forall \xi\in\partial\varphi(\vc{x}).
\end{equation*}
Let $\Phi_\mu$ denote the set of $\mu$-strongly convex functions. If we know that the energy function $\varphi$ belongs to $\Phi_\mu$, then the estimation problem 
\eqref{eqn:opt_phi} is adapted to the following optimization problem
\begin{equation}\label{eqn:opt_phi_strongly_cvx}
\begin{array}{cl}
\minOp_{
	\xi_1,\ldots,\xi_{\nS}\in \Rbb^n,\ \! \varphi\in\Phi_\mu} &  \sumOp_{i=1}^{\nS} \|\vc{y}_i+\xi_i\|^2,\\
\textrm{s.t.} & \xi_i\in\partial\varphi(\vc{x}_i), \quad \forall i=1,\ldots,\nS. 
\end{array}
\end{equation} 
In this case, the optimization problem \eqref{eqn:opt_thetaxi} is modified to \begin{equation}\label{eqn:opt_thetaxi_strongly_cvx}
\begin{array}{cl}
\minOp_{\substack{
		\xi_1,\ldots,\xi_{\nS}\in \Rbb^n\!\!\\ \!\!\!\theta_1,\ldots,\theta_{\nS} \in\Rbb\!\!}} &  \sumOp_{i=1}^{\nS} \|\vc{y}_i+\xi_i\|^2,
\\
\textrm{s.t.} & \theta_j-\theta_i\ge \inner{\xi_i}{\vc{x}_j-\vc{x}_i} +\frac{\mu}{2}\|\vc{x}_i-\vc{x}_j\|^2,\quad \forall
i,j\in\{1,\ldots,\nS\}. 
\end{array}
\end{equation}
In the case of $\mu$-strongly concave energy functions, this can be adapted along the lines of the formulation in \eqref{eqn:opt_thetaxi_concave} .
\subsubsection{Including Knowledge of Equilibrium Points} 
Let assume that we know $\vc{x}_0 = \zero$ is an equilibrium of the dynamical system. Accordingly, we need to have $\zero\in \partial\varphi(\zero)$. Therefore, in order to incorporate this knowledge, the estimation problem 
\eqref{eqn:opt_phi} should be modified to the following,
\begin{equation}\label{eqn:opt_phi_equilibrium}
\begin{array}{cl}
\minOp_{
	\xi_1,\ldots,\xi_{\nS}\in \Rbb^n,\ \! \varphi\in\Phi} 
&  
\sumOp_{i=1}^{\nS} \|\vc{y}_i+\xi_i\|^2,\\
\textrm{s.t.} 
&  
\xi_i\in\partial\varphi(\vc{x}_i), \quad i=1,\ldots,\nS,
\\&
\zero\in \partial\varphi(\zero).
\end{array}
\end{equation} 
Without loss of generality, we can assume that $\varphi(\zero)=0$. 
Accordingly, one can set $\theta_0=0$ and $\xi_0=\zero$.
Therefore, we modify the optimization problem \eqref{eqn:opt_thetaxi} as following 
\begin{equation}\label{eqn:opt_thetaxi_equilibrium}
\begin{array}{cl}
\minOp_{\substack{
		\xi_0,\ldots,\xi_{\nS}\in \Rbb^n\!\!\\ \!\!\!\theta_0,\ldots,\theta_{\nS} \in\Rbb\!\!}} &  \sumOp_{i=1}^{\nS} \|\vc{y}_i+\xi_i\|^2,
\\
\textrm{s.t.} & \theta_j-\theta_i\ge \inner{\xi_i}{\vc{x}_j-\vc{x}_i}, \quad \forall  i,j\in\{0,1,\ldots,\nS\},
\\& \theta_0=0,
\\& \xi_0=\zero.
\end{array}
\end{equation}
	
\section{General Energy Functions}\label{sec:noncvx_energy}
In this section, we consider general energy functions.
For the sake of generality, the differentiability assumption of the energy function is relaxed initially.
The next theorem plays a key role in the formulation of the estimation problem.
\begin{theorem}[\cite{yuille2003concave}]\label{thm:diff_cvx} {\rm (i)}
Let $\Omega\subseteq \Rbb^n$ be a  convex set and $\varphi:\Omega\to \Rbb$ be a $C^2(\Omega,\Rbb)$ function with bounded Hessian, i.e., $\sup_{\vc{x}\in\Omega}\|\nabla^2\!\varphi(\vc{x})\|<\infty$. Then, there exist convex functions $\varphione,\varphitwo:\Omega\to \Rbb$ such that $\varphi(\vc{x}) = \varphione(\vc{x})-\varphitwo(\vc{x})$, for any $\vc{x}\in\Omega$.
{\rm (ii)} Moreover, if $\Omega$ is convex and compact, then the Hessian is bounded and $\varphi$ is decomposable as the difference of two convex functions. 
\end{theorem}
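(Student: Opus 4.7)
The plan is to exploit the bounded Hessian hypothesis to construct a dominating convex quadratic. Let $M := \sup_{\vc{x}\in\Omega}\|\nabla^2\!\varphi(\vc{x})\|$, which is finite by assumption, where $\|\cdot\|$ denotes the spectral norm. Since each $\nabla^2\!\varphi(\vc{x})$ is symmetric, its spectral norm equals the largest absolute value of its eigenvalues, and therefore
\begin{equation*}
-M\eye \preceq \nabla^2\!\varphi(\vc{x}) \preceq M\eye, \qquad \forall \vc{x}\in\Omega.
\end{equation*}
This two-sided semidefinite bound is the only content extracted from the hypothesis, and it is what makes the rest of the argument routine.

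Given this bound, I would define
\begin{equation*}
\varphitwo(\vc{x}) := \tfrac{M}{2}\|\vc{x}\|^2, \qquad \varphione(\vc{x}) := \varphi(\vc{x}) + \varphitwo(\vc{x}),
\end{equation*}
so that $\varphi = \varphione - \varphitwo$ by construction. The function $\varphitwo$ is a positive-semidefinite quadratic, hence convex. For $\varphione$, a direct computation gives $\nabla^2\!\varphione(\vc{x}) = \nabla^2\!\varphi(\vc{x}) + M\eye \succeq \zeromx$ for every $\vc{x}\in\Omega$. Combined with the convexity of $\Omega$, the PSD-Hessian criterion implies that $\varphione$ is convex on $\Omega$, which establishes part (i).

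For part (ii), the task reduces to showing that bounded Hessian is automatic when $\Omega$ is compact. Since $\varphi\in C^2(\Omega,\Rbb)$, the map $\vc{x}\mapsto\nabla^2\!\varphi(\vc{x})$ is continuous, and so is $\vc{x}\mapsto\|\nabla^2\!\varphi(\vc{x})\|$ as a composition of continuous maps. The extreme value theorem then guarantees that this scalar continuous function attains its supremum on the compact set $\Omega$, so $M<\infty$ and part (i) applies.

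The argument is essentially obstruction-free; the only step requiring mild care is translating the spectral-norm hypothesis into the two-sided semidefinite bound used above, which is standard for symmetric matrices. The choice $\varphitwo = \tfrac{M}{2}\|\cdot\|^2$ is in fact the canonical one, since any quadratic whose Hessian is $\succeq M\eye$ would work equally well but offers no gain. No tightness in $M$ is needed, which is why the decomposition is never unique.
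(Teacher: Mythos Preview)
Your proof is correct and is precisely the canonical construction; the paper itself does not supply a proof of this theorem but cites it from \cite{yuille2003concave}, where the same quadratic-shift argument (add $\tfrac{M}{2}\|\vc{x}\|^2$ to force a PSD Hessian) is used. There is nothing to add.
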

We construct the {\em loss function} for the estimation problem, denoted by $\Lscr_{\Phi,\Dscr}$, as the sum of squared errors. 
More precisely, the function $\Lscr_{\Phi,\Dscr}:\Phi\times\Phi\times\Rbb^{\nS n}\times\Rbb^{\nS n}\to \Rbb$ is defined as 
\begin{equation}\label{eqn:loss_phi_DC}
\Lscr_{\Phi,\Dscr}(\varphione,\varphitwo,\xibone,\xibtwo) 
:=
\sum_{i=1}^{\nS} 
\|\vc{y}_i
+
\xione_i
-
\xitwo_i\|^2
+ 
\sum_{i=1}^{\nS} \Ical_{ \partial \varphione(\vc{x}_i)}(\xione_i) 
+
\sum_{i=1}^{\nS} \Ical_{ \partial \varphitwo(\vc{x}_i)}(\xitwo_i), 
\end{equation}
for any pair of convex functions $\varphione,\varphitwo \in \Phi$ and vectors $\xione_1,\ldots,\xione_{\nS},\xitwo_1,\ldots,\xitwo_{\nS}\in\Rbb^n$, 
$\xibone,\xibtwo \in\Rbb^{\nS n}$ where $\xibone,\xibtwo \in\Rbb^{\nS n}$ are column vectors respectively defined as $\xibone :=
[\xione_1{}^\tr  \ldots\  \xione_{\nS}{}^\tr ]^\tr$
and
$\xibtwo :=
[\xitwo_1{}^\tr  \ldots\  \xitwo_{\nS}{}^\tr ]^\tr$.
Accordingly, the estimation problem is defined as
\begin{equation}\label{eqn:opt_phi_DC}
\begin{array}{cl}
\minOp_{
	\substack{
\xione_1,\ldots,\xione_{\nS}\in \Rbb^n, \varphione\in\Phi\\
\xitwo_1,\ldots,\xitwo_{\nS}\in \Rbb^n, \varphitwo\in\Phi}
	} 
&
\sumOp_{i=1}^{\nS} \|\vc{y}_i+\xione_i-\xitwo_i\|^2,\\
\textrm{s.t.} 
&
\xione_i \in \partial\varphione(\vc{x}_i), \quad \forall  i=1,\ldots,\nS,
\\
&
\xitwo_i \in \partial\varphitwo(\vc{x}_i), \quad \forall i=1,\ldots,\nS.  
\end{array}
\end{equation}
Analogous to the previous section, we can introduce a finite-dimensional formulation as following
\begin{equation}\label{eqn:opt_thetaxi_DC}
\begin{array}{cl}
\minOp_{\substack{
		\thetabone,\thetabtwo\in\Rbb^n\\ \xibone,\xibtwo\in \Rbb^{\nS n}}} 
&
\sumOp_{i=1}^{\nS} \|\vc{y}_i+\xione_i-\xitwo_i\|^2,
\\
\textrm{s.t.} 
& 
\thetaone_j-\thetaone_i \ge  \inner{\xione_i}{\vc{x}_j - \vc{x}_i}, \quad\forall i,j\in\{1,\ldots,\nS\},
\\
& 
\thetatwo_j-\thetatwo_i \ge  \inner{\xitwo_i}{\vc{x}_j - \vc{x}_i}, \quad\forall i,j\in\{1,\ldots,\nS\},  
\end{array}
\end{equation}
where $\thetabone,\thetabtwo\in\Rbb^n$ are defined respectively as $\thetabone =[\thetaone_1,\ldots,\thetaone_{\nS}]^\tr$ and
$\thetabtwo =[\thetatwo_1,\ldots,\thetatwo_{\nS}]^\tr$.
Considering this optimization problem, we define a loss function $\Lscr_{\Kcal,\Dscr}:\Rbb^{\nS} \times\Rbb^{\nS} \times \Rbb^{\nS n}\times\Rbb^{\nS n}\to \bar{\Rbb}$  as
\begin{equation}
\Lscr_{\Kcal,\Dscr}(\thetabone,\thetabtwo,\xibone,\xibtwo) := \sumOp_{i=1}^{\nS} \|\vc{y}_i+\xione_i-\xitwo_i\|^2 
+ \Ical_{\Kcal}(\thetabone,\xibone)+ \Ical_{\Kcal}(\thetabtwo,\xibtwo).
\end{equation} 
With lines of proof similar to those in Section \ref{sec:cvx_energy}, we formalize the connection between optimization problems \eqref{eqn:opt_phi_DC} and \eqref{eqn:opt_thetaxi_DC}.
\begin{theorem}\label{thm:correspondence_0_DC}
{\rm (i)} Let $(\varphione\!,\varphitwo\!,\xibone\!,\xibtwo)$ be a solution of \eqref{eqn:opt_phi_DC}. Then, \eqref{eqn:opt_thetaxi_DC} has a solution $(\thetabone\!,\thetabtwo\!,\xibone\!,\xibtwo)$ such that 
\begin{equation}
\Lscr_{\Kcal,\Dscr}(\thetabone,\thetabtwo,\xibone,\xibtwo)
=
\Lscr_{\Phi,\Dscr}(\varphione,\varphitwo,\xibone,\xibtwo).
\end{equation}
{\rm (ii)} If $(\thetabone,\thetabtwo,\xibone,\xibtwo)$ is a solution of \eqref{eqn:opt_thetaxi_DC}, then \eqref{eqn:opt_phi_DC} has a solution $(\varphione,\varphitwo,\xibone,\xibtwo)$ such that 
\begin{equation}
\Lscr_{\Phi,\Dscr}(\varphione,\varphitwo,\xibone,\xibtwo) = \Lscr_{\Kcal,\Dscr}(\thetabone,\thetabtwo,\xibone,\xibtwo),
\end{equation}
and $\xione_i\in \partial\varphione(\vc{x}_i)$, $\xitwo_i\in \partial\varphitwo(\vc{x}_i)$, for all $i=1,\ldots,\nS$.
\end{theorem}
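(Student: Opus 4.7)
The plan is to mirror the two-part argument used to prove Theorem~\ref{thm:correspondence_0}, taking advantage of the fact that the feasibility constraints in \eqref{eqn:opt_thetaxi_DC} fully decouple into two independent copies of $(\thetab,\xib)\in\Kcal$, while the two convex components $\varphione,\varphitwo$ are only coupled through the data-fitting term $\|\vc{y}_i+\xione_i-\xitwo_i\|^2$. This decoupling means the constructive devices from the convex case (setting $\theta_i=\varphi(\vc{x}_i)$ in one direction, and defining $\varphi(\vc{x}):=\max_i \inner{\xi_i}{\vc{x}-\vc{x}_i}+\theta_i$ in the other) can be applied twice, in parallel, with only bookkeeping changes.

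For part (i), given a solution $(\varphione,\varphitwo,\xibone,\xibtwo)$ of \eqref{eqn:opt_phi_DC}, I would set $\thetaone_i:=\varphione(\vc{x}_i)$ and $\thetatwo_i:=\varphitwo(\vc{x}_i)$. Applying the subgradient variational inequality \eqref{eqn:cvx_var_ineq_subgrad} separately to $\varphione$ and $\varphitwo$ yields $(\thetabone,\xibone)\in\Kcal$ and $(\thetabtwo,\xibtwo)\in\Kcal$, so the two indicator terms in $\Lscr_{\Kcal,\Dscr}$ vanish and the two loss functionals coincide. To show optimality in \eqref{eqn:opt_thetaxi_DC}, I would argue by contradiction: any strictly better feasible quadruple $(\tilde\thetabone,\tilde\thetabtwo,\tilde\xibone,\tilde\xibtwo)$ lets me define $\tilde\varphij(\vc{x}):=\max_{i}\inner{\tilde\xij_i}{\vc{x}-\vc{x}_i}+\tilde\thetaj_i$ for $j=1,2$, exactly as in \eqref{eqn:checkphi}. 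Each $\tilde\varphij$ is convex, and the same manipulation that leads from \eqref{eqn:theta_i_theta_j} to \eqref{eqn:subgrad_checkphi} shows $\tilde\xij_i\in\partial\tilde\varphij(\vc{x}_i)$ for $j=1,2$. Hence $(\tilde\varphione,\tilde\varphitwo,\tilde\xibone,\tilde\xibtwo)$ is feasible for \eqref{eqn:opt_phi_DC} with the same (strictly smaller) cost, contradicting the optimality of $(\varphione,\varphitwo,\xibone,\xibtwo)$.

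For part (ii), the reverse direction, given a solution $(\thetabone,\thetabtwo,\xibone,\xibtwo)$ of \eqref{eqn:opt_thetaxi_DC}, I would define $\varphij(\vc{x}):=\max_{i}\inner{\xij_i}{\vc{x}-\vc{x}_i}+\thetaj_i$ for $j=1,2$. These are convex, and invoking the same subgradient characterization \eqref{eqn:subgrad_checkphi} together with the feasibility of each $(\thetabj,\xibj)$ in $\Kcal$ yields $\xij_i\in\partial\varphij(\vc{x}_i)$, so the indicator terms in $\Lscr_{\Phi,\Dscr}$ vanish and the two losses agree. Optimality follows by contradiction: any strictly better $(\tilde\varphione,\tilde\varphitwo,\tilde\xibone,\tilde\xibtwo)$ yields, via $\tilde\thetaj_i:=\tilde\varphij(\vc{x}_i)$ and \eqref{eqn:cvx_var_ineq_subgrad}, a feasible point for \eqref{eqn:opt_thetaxi_DC} with the same (strictly smaller) cost, contradicting optimality of $(\thetabone,\thetabtwo,\xibone,\xibtwo)$.

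The main obstacle, such as it is, lies not in any new analytical difficulty but in being careful with the simultaneous construction of two convex majorants and in verifying that the subgradient containments $\tilde\xij_i\in\partial\tilde\varphij(\vc{x}_i)$ hold for both $j=1$ and $j=2$ in parallel; the data-fitting term couples $\xione$ and $\xitwo$, but because the feasibility inequalities in \eqref{eqn:opt_thetaxi_DC} are separable in $(\thetabone,\xibone)$ and $(\thetabtwo,\xibtwo)$, the constructive step of Theorem~\ref{thm:correspondence_0} transfers componentwise without modification. No new ingredient beyond \eqref{eqn:cvx_var_ineq_subgrad}, \eqref{eqn:checkphi}, and \eqref{eqn:subgrad_checkphi} is needed.
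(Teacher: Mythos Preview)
Your proposal is correct and matches the paper's own treatment: the paper does not give an independent proof of Theorem~\ref{thm:correspondence_0_DC} but simply states that it follows ``with lines of proof similar to those in Section~\ref{sec:cvx_energy},'' which is precisely the componentwise mirroring of Theorem~\ref{thm:correspondence_0} that you outline. Your observation that the feasibility constraints in \eqref{eqn:opt_thetaxi_DC} decouple into two independent copies of $\Kcal$ while only the objective couples $\xibone$ and $\xibtwo$ is exactly the reason the construction transfers without modification.
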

\begin{theorem}\label{thm:correspondence_DC}
Optimization problem \eqref{eqn:opt_phi_DC} admits a solution in form of 
\begin{equation}\label{eqn:varphi_max_DC}
\varphil(\vc{x}) : =  \varphione(\vc{x}) - \varphitwo(\vc{x}) 
=
\max_{1\le i_1\le\nS}\inner{\xione_{i_1}}{\vc{x}-\vc{x}_{i_1}} 
+
\thetaone_{i_1}
-
\max_{1\le i_2\le\nS}\inner{\xitwo_{i_2}}{\vc{x}-\vc{x}_{i_2}}
+
\thetatwo_{i_2}.
\end{equation}
where $(\thetabone,\thetabtwo,\xibone,\xibtwo)$ is a solution of \eqref{eqn:opt_thetaxi_DC}. Moreover, we have
\begin{equation}
\Lscr_{\Phi,\Dscr}(\varphione,\varphitwo,\xibone,\xibtwo) = \Lscr_{\Kcal,\Dscr}(\thetabone,\thetabtwo,\xibone,\xibtwo),
\end{equation}
and $\xione_i\in \partial\varphione(\vc{x}_i)$, $\xitwo_i\in \partial\varphitwo(\vc{x}_i)$, for all $i=1,\ldots,\nS$.
\end{theorem}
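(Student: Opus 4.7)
The plan is to mirror the strategy used for Theorem~\ref{thm:correspondence}: establish existence of a minimizer for the finite-dimensional problem \eqref{eqn:opt_thetaxi_DC}, then invoke Theorem~\ref{thm:correspondence_0_DC}(ii) together with the pointwise-max construction to obtain a solution of \eqref{eqn:opt_phi_DC} in the claimed form, along with the loss equality and the subgradient inclusions.

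First I would observe that the feasible set of \eqref{eqn:opt_thetaxi_DC} can be identified with $\Kcal\times\Kcal$: it consists of pairs $(\thetabone,\xibone)$ and $(\thetabtwo,\xibtwo)$ each lying in the non-empty polyhedral cone $\Kcal$. The objective $\sum_{i=1}^{\nS}\|\vc{y}_i+\xione_i-\xitwo_i\|^2$ is a convex quadratic bounded below by zero. Hence \eqref{eqn:opt_thetaxi_DC} is a convex quadratic program on a non-empty polyhedron with a bounded-below objective, and the Frank--Wolfe theorem for quadratic programs guarantees that the infimum is attained; I would fix one such minimizer $(\thetabone,\thetabtwo,\xibone,\xibtwo)$.

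Next, in analogy with \eqref{eqn:thm_varphi_max_0}, I would define
$$\varphione(\vc{x}):=\max_{1\le i\le\nS}\inner{\xione_i}{\vc{x}-\vc{x}_i}+\thetaone_i,\qquad \varphitwo(\vc{x}):=\max_{1\le i\le\nS}\inner{\xitwo_i}{\vc{x}-\vc{x}_i}+\thetatwo_i,$$
each a pointwise maximum of affine functions and therefore convex, so $\varphione,\varphitwo\in\Phi$. Applying Theorem~\ref{thm:correspondence_0_DC}(ii) to this minimizer then yields immediately that $(\varphione,\varphitwo,\xibone,\xibtwo)$ solves \eqref{eqn:opt_phi_DC}, that the two losses agree, and that $\xione_i\in\partial\varphione(\vc{x}_i)$ and $\xitwo_i\in\partial\varphitwo(\vc{x}_i)$ for every $i$, which is exactly the representation \eqref{eqn:varphi_max_DC}.

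The main obstacle I expect is the existence step. Unlike the convex case, the objective of \eqref{eqn:opt_thetaxi_DC} depends on $\xibone$ and $\xibtwo$ only through the differences $\xione_i-\xitwo_i$, so it is not strongly convex in these variables, and coerciveness on $\Kcal\times\Kcal$ is not automatic; a naive Weierstrass argument fails because the feasible set is unbounded. The clean remedy is the Frank--Wolfe theorem on quadratic programs over polyhedra, which needs only bounded-belowness on a non-empty polyhedron. Once existence is secured, the remaining steps are essentially a transcription of the convex-case proof, since the subdifferential formula for a pointwise maximum of affine functions (analogous to \eqref{eqn:subgrad_checkphi}) applies to each factor $\varphione,\varphitwo$ separately.
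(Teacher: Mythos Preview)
Your proposal is correct and follows essentially the same route the paper takes (the paper states Theorem~\ref{thm:correspondence_DC} without an explicit proof, relying on ``lines of proof similar to those in Section~\ref{sec:cvx_energy}'', i.e., mirror Theorem~\ref{thm:correspondence}). Your explicit appeal to the Frank--Wolfe theorem for existence is a welcome sharpening, since in the DC case the objective depends only on the differences $\xione_i-\xitwo_i$ and the paper's terse ``QP on a non-empty polyhedral cone has a solution'' otherwise leaves the attainment step unjustified.
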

As in Section \ref{sec:cvx_energy}, regularization can be used for imposing uniqueness in the estimation.
Using the same arguments as those given in the proof of Theorem \ref{thm:Zcal_theta_xi}, one can obtain a similar conclusion and subsequently  show that the difference $\xibone-\xibtwo $ is unique and the potential non-uniqueness of the solution is due to the other terms. Consequently, one can introduce the regularized cost function $\Jcal_{\lambda}:\Rbb^n
\!\times\!\Rbb^n\!\times\!\Rbb^{\nS n}\!\times\!\Rbb^{\nS n}\!\to\! \Rbb$ as 
\begin{equation}\label{eqn:Jcal_lambda_DC}
\Jcal_{\lambda}(\thetabone,\thetabtwo,\xibone,\xibtwo) := \sum_{i=1}^{\nS}\|\vc{y}_i+\xione_i-\xitwo_i\|^2  
+
\lambda \Big{(}\|\thetabone\|^2+\|\thetabtwo\|^2+\|\xibone+\xibtwo\|^2\Big{)},
\end{equation}  
and solve the following regularized optimization problem
\begin{equation}\label{eqn:Jlambda_DC}
\minOp_{(\thetabone,\xibone)\in \Kcal,(\thetabtwo,\xibtwo)\in \Kcal} \Jcal_{\lambda}(\thetabone,\thetabtwo,\xibone,\xibtwo),
\end{equation}
where $\lambda> 0$ is the regularization weight. Similar to Theorem \ref{thm:Jlambda_unique}, one can show that, for any $\lambda>0$, the regularized optimization problem \eqref{eqn:Jlambda_DC} has a unique solution, denoted by  $(\thetablone\!,\thetabltwo\!,\xiblone\!,\xibltwo)$.
Consequently, we define $\varphilone$ and $\varphiltwo$ similar to \eqref{eqn:varphi_max_lambda}, and thus, 
the estimation is defined as
\begin{equation*}\label{eqn:varphi_max_lambda_DC}
\varphil(\vc{x}) : =  \varphilone(\vc{x}) - \varphiltwo(\vc{x}) 
=
\max_{1\le i_1\le\nS}\inner{\xione_{\lambda,i_1}}{\vc{x}-\vc{x}_{i_1}} 
+
\thetaone_{\lambda,i_1}
-
\max_{1\le i_2\le\nS}\inner{\xitwo_{\lambda,i_2}}{\vc{x}-\vc{x}_{i_2}}
+
\thetatwo_{\lambda,i_2}.
\end{equation*}
Similar to the previous section, we can smooth this estimator 
using log-sum-exp function. 
In this regard, let 
$\varphiltone$ and $\varphilttwo$ be defined as in \eqref{eqn:varphi_max_lambda_tau}, and then define the {\em smooth estimator}, denoted by $\varphilt$, as 
\begin{equation}\label{eqn:phi_dc_smoothed}
\begin{array}{c}
\varphilt(\vc{x}) = \varphiltone(\vc{x})-\varphilttwo(\vc{x}).
\end{array}
\end{equation}
Note that Corollary \ref{cor:log-sum-exp-extended-to-smooth} and Corollary \ref{cor:tightness-smooth} are valid for $\varphiltone$ and $\varphilttwo$. 
Moreover, we have the following corollary for $\varphilt$. 
\begin{corollary}\label{cor:tightness-smooth_DC}
Let $\epsilon$ be an arbitrary positive real scalar and let $\tau < \frac{\epsilon}{2\ln \nS}$. Then, due to  \eqref{eqn:philt_tight},  we have $|\varphil(\vc{x})-
\varphilt(\vc{x})|<\epsilon$, for any $\vc{x}\in\Rbb^n$.
In other words, we can uniformly approximate $\varphil$ with $\varphilt$ to an arbitrary accuracy by taking $\tau$ sufficiently small.
\end{corollary}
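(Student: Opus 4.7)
The plan is to reduce the claim to the convex-case bound \eqref{eqn:philt_tight} applied separately to each of the two convex pieces $\varphilone$ and $\varphiltwo$, and then combine the two one-sided estimates by the triangle inequality. Since, by construction, $\varphil = \varphilone - \varphiltwo$ and $\varphilt = \varphiltone - \varphilttwo$, the whole statement is really a statement about how small perturbations of the convex components translate into a small perturbation of their difference.

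Concretely, I would first invoke Corollary~\ref{cor:log-sum-exp-extended-to-smooth} (or equivalently the bound \eqref{eqn:philt_tight}) separately for the pair $(\varphilone,\varphiltone)$ and the pair $(\varphiltwo,\varphilttwo)$ to obtain, for every $\vc{x}\in\Rbb^n$,
\begin{equation*}
0 \le \varphilone(\vc{x}) - \varphiltone(\vc{x}) \le \tau\ln\nS,
\qquad
0 \le \varphiltwo(\vc{x}) - \varphilttwo(\vc{x}) \le \tau\ln\nS.
\end{equation*}
Subtracting these two chains and using the definition of $\varphil$ and $\varphilt$ from \eqref{eqn:phi_dc_smoothed} yields
\begin{equation*}
-\tau\ln\nS \le \varphil(\vc{x}) - \varphilt(\vc{x}) \le \tau\ln\nS,
\end{equation*}
after a one-line bookkeeping step, so that $|\varphil(\vc{x})-\varphilt(\vc{x})| \le 2\tau\ln\nS$ uniformly in $\vc{x}$.

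The conclusion then follows by choosing $\tau < \epsilon/(2\ln\nS)$, which is exactly the hypothesis of the corollary; note that the factor $2$ in the denominator (as opposed to the factor in Corollary~\ref{cor:tightness-smooth}) arises precisely because the two-sided errors from the two convex pieces can add rather than cancel. There is essentially no hard step here: the only thing one must be careful about is the sign bookkeeping when passing from the two one-sided inequalities for $\varphilone-\varphiltone$ and $\varphiltwo-\varphilttwo$ to a two-sided bound on their difference. Uniformity in $\vc{x}$ is automatic because the bound $\tau\ln\nS$ in \eqref{eqn:philt_tight} is itself independent of $\vc{x}$.
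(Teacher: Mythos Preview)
Your proposal is correct and is exactly the argument the paper has in mind: the corollary is stated without a separate proof precisely because it follows immediately from applying \eqref{eqn:philt_tight} to each convex piece and combining. One small remark: your displayed two-sided inequality already gives $|\varphil(\vc{x})-\varphilt(\vc{x})|\le \tau\ln\nS$, so the subsequent ``$\le 2\tau\ln\nS$'' is a (harmless) weakening---the factor $2$ in the hypothesis $\tau<\epsilon/(2\ln\nS)$ is what you get from the cruder triangle-inequality bound $|\varphilone-\varphiltone|+|\varphiltwo-\varphilttwo|$, and either route suffices for the stated conclusion.
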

\begin{remark}
Let $\{h_k\}_{k=1}^p$ be a set of given vector fields. 
Then, the proposed method can be extended to the case where the dynamics is sum of a gradient flow and a parametric part, i.e., 
\begin{equation}
f(\vc{x})=-\nabla\! \varphi(\vc{x})+\sum_{k=1}^p \alpha_k h_k(\vc{x}).
\end{equation} 
To estimate $f$, we modify optimization problem \eqref{eqn:opt_phi_DC} to 
\begin{equation}\label{eqn:opt_phi_DC_para}
\begin{array}{cl}
\minOp_{
	\substack{
		\xibone\in \Rbb^{\nS n},\ \! \varphione\in\Phi\\
		\xibtwo\in \Rbb^{\nS n},\ \! \varphitwo\in\Phi\\
		\alpha_1,\ldots,\alpha_p\in\Rbb}
} 
& 
\sumOp_{i=1}^{\nS} \|\vc{y}_i + \xione_i - \xitwo_i - \sumOp_{k=1}^p \alpha_k h_k(\vc{x}_i)\|^2,
\\\mathrm{s.t.} 
&
\xione_i \in \partial\varphione(\vc{x}_i), \quad \forall  i=1,\ldots,\nS,
\\
&
\xitwo_i \in \partial\varphitwo(\vc{x}_i), \quad \forall i=1,\ldots,\nS.  
\end{array}
\end{equation}
and apply the previous adaptations to make the problem finite dimensional and sufficiently smooth.
\end{remark}		
\begin{figure}[t]
	\centering
	\includegraphics[trim={2.75cm 5.75cm 3.5cm 6.4cm},clip,width=0.75\textwidth]{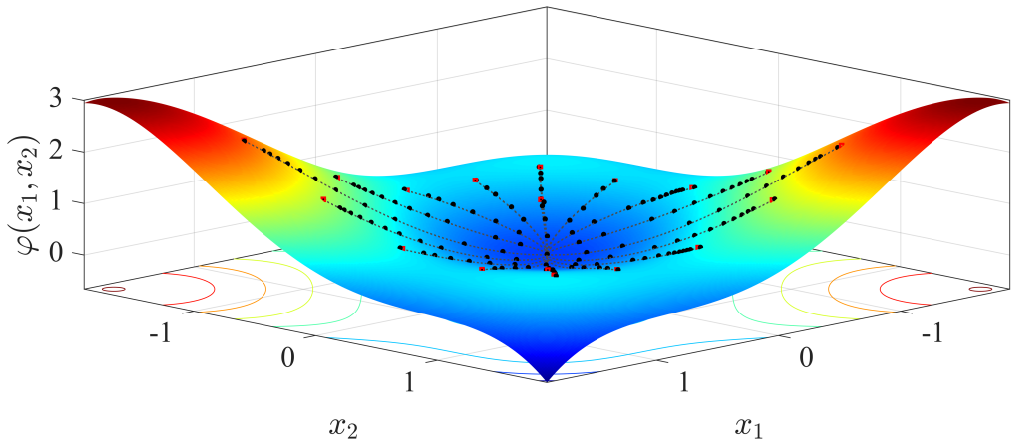}
	\caption{The  graph of the energy function \eqref{eqn:example_phi} and the sampled trajectories. Red bullets show the starting points of trajectories while the black bullets show the sampling points.}
	\label{fig:phi}
\end{figure}
\section{Numerical Experiments}\label{sec:numerics}
In this section, we discuss a numerical example.
To this end, consider the non-convex energy function $\varphi:\Rbb^2\to\Rbb$ defined as
\begin{equation}\label{eqn:example_phi}
\varphi(x_1,x_2) = 
ax_1^2+  
bx_1x_2+  
ax_2^2
-cx_1^4-cx_2^4,  
\end{equation}
where $a = 0.7, b= -0.5 $ and $c = 0.15$.
The graph of energy function is shown in Figure \ref{fig:phi}.
The gradient flow dynamics of \eqref{eqn:example_phi} are given by
\begin{equation}
\begin{array}{r}
\dot{x}_1 = f_1(\vc{x}) =  -\partial_{x_1}\varphi(x_1,x_2) = -2ax_1-bx_2+4cx_1^3, 
\\
\dot{x}_2 = f_2(\vc{x}) =  -\partial_{x_2}\varphi(x_1,x_2) = -bx_1-2ax_2+4cx_2^3. 
\end{array}
\end{equation}
Since the gradient flows are curl-free, a single  trajectory does not explore the space.
Therefore, in order to collect sufficient data points for identifying the dynamics, it is required to take a  set of sufficiently rich initial points and sample the resulting trajectories.
We consider $18$ initial points in 
\begin{equation}
\Omega:=[-1.9,1.9]\times[-1.9,1.9]\subset \Rbb^2.
\end{equation} 
From the resulting trajectories,  
we take $118$ noisy samples with additive white Gaussian noise of zero mean and standard deviation $\sigma_w=0.01$.
In Figure \ref{fig:phi}, the initial points, the sampled points and the trajectories are shown by red bullets, black bullets and dotted lines, respectively.
The derivatives are estimated using MATLAB tools for {\em curve-fitting}.
From these point, $80\%$ are randomly chosen as the data set for estimating $f(\vc{x})=-\nabla\!\varphi(\vc{x})$. 
By solving \eqref{eqn:Jlambda_DC}, a close to minimum norm estimation is obtained (see Theorem \ref{thm:Jlambda_unique} and Section \ref{sec:noncvx_energy}). 
Then, the results are used to construct an estimate of the energy function as in \eqref{eqn:varphi_max_lambda_DC}.
Further more,  using the {\em log-sum-exp} function a smooth version of $\varphi$  is derived as in \eqref{eqn:phi_dc_smoothed}.
The parameters $\lambda$ and $\tau$ are chosen based on a cross-validation procedure performed using  the remaining $20\%$ of the data points. The results are $\lambda=10^{-8}$ and $\tau=0.16$.
Following this, we estimate $\partial_{x_1}\varphi(x_1,x_2)$ and $\partial_{x_2}\varphi(x_1,x_2)$ due to the gradient of the smoothed function and Corollary \ref{cor:log-sum-exp-extended-to-smooth}.
Figure \ref{fig:varphi_varphi_hat} shows that the results closely fit the true values.
The calculated {coefficient of determination} for the estimations, also known as {\em R-squared},  equals to $92.4\%$. 

\begin{figure}[t]
	\centering
	\includegraphics[width=0.85\textwidth]{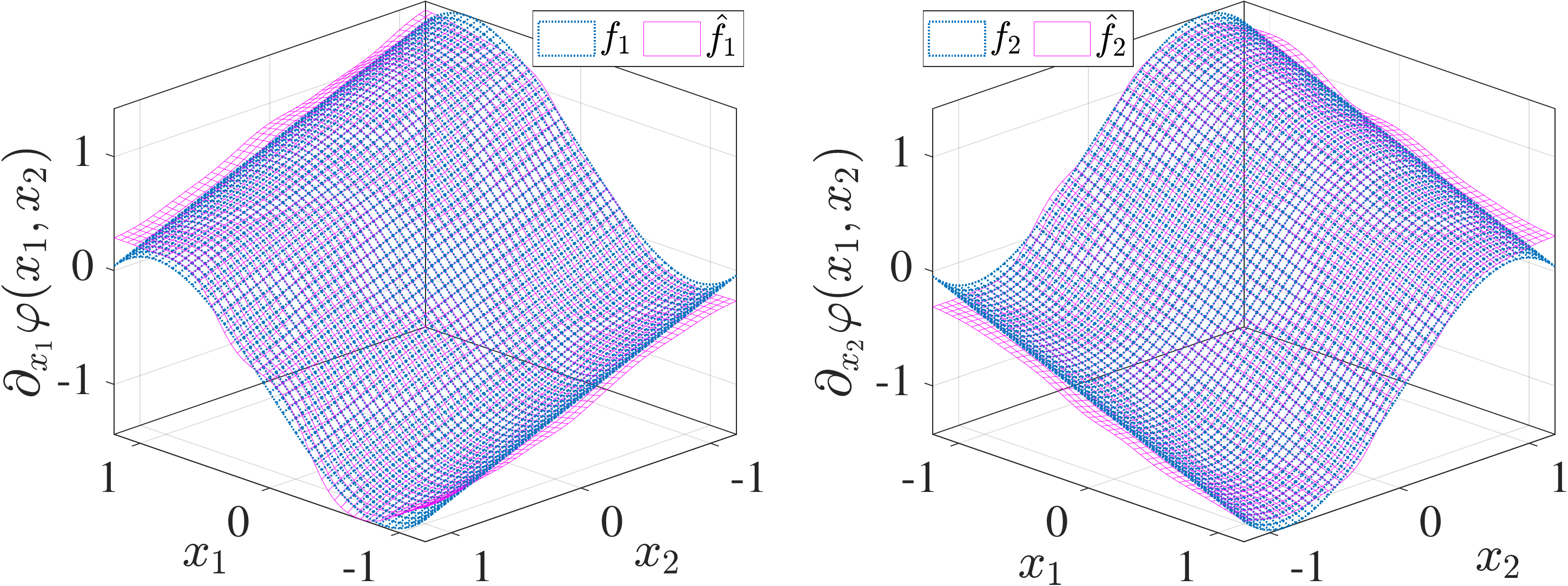}
	\caption{In left, the first coordinate of $\nabla\!\varphi$ and the corresponding estimations are shown in blue and magenta, respectively. Similarly, right shows the second coordinate of $\nabla\!\varphi$ and its estimation respectively in blue and magenta. The plots are rotated for clarity.}
	\label{fig:varphi_varphi_hat}
\end{figure}
	
 \section{Conclusion}\label{sec:con}
We have introduced a nonparametric system identification method for nonlinear systems with gradient-flow dynamics. The corresponding vector field is the gradient field of a potential energy function. 
This fact is structural prior knowledge as so is used as a constraint in the proposed method. Initially, the identification problem is formulated as a minimization of the fitting or prediction error over the hypothesis space of convex functions. To give a tractable problem, an equivalent formulation is derived as a finite dimensional quadratic program. This formulation is based on two central ideas: representing the energy function as a difference of two convex functions, and a necessary and sufficient condition for convexity.
The final optimization problem approximates the convex functions jointly. The existence, uniqueness and smoothness of the solution is addressed. Finally, a numerical example is presented where a non-convex energy function is considered and based on sample data from multiple trajectories, the corresponding gradient flow is identified.	
	
\bibliographystyle{IEEEtran}
\bibliography{mybib}

\begin{thebibliography}{10}
\providecommand{\url}[1]{#1}
\csname url@samestyle\endcsname
\providecommand{\newblock}{\relax}
\providecommand{\bibinfo}[2]{#2}
\providecommand{\BIBentrySTDinterwordspacing}{\spaceskip=0pt\relax}
\providecommand{\BIBentryALTinterwordstretchfactor}{4}
\providecommand{\BIBentryALTinterwordspacing}{\spaceskip=\fontdimen2\font plus
\BIBentryALTinterwordstretchfactor\fontdimen3\font minus
  \fontdimen4\font\relax}
\providecommand{\BIBforeignlanguage}[2]{{%
\expandafter\ifx\csname l@#1\endcsname\relax
\typeout{** WARNING: IEEEtran.bst: No hyphenation pattern has been}%
\typeout{** loaded for the language `#1'. Using the pattern for}%
\typeout{** the default language instead.}%
\else
\language=\csname l@#1\endcsname
\fi
#2}}
\providecommand{\BIBdecl}{\relax}
\BIBdecl

\bibitem{strogatz2001nonlinear}
S.~Strogatz, \emph{Nonlinear dynamics and chaos: with applications to physics,
  biology, chemistry, and engineering}.\hskip 1em plus 0.5em minus 0.4em\relax
  Perseus Books, 2001.

\bibitem{chen2018neural}
T.~Q. Chen, Y.~Rubanova, J.~Bettencourt, and D.~K. Duvenaud, ``Neural ordinary
  differential equations,'' in \emph{Advances in Neural Information Processing
  Systems}, 2018, pp. 6571--6583.

\bibitem{pillonetto2014kernel}
G.~Pillonetto, F.~Dinuzzo, T.~Chen, G.~De~Nicolao, and L.~Ljung, ``Kernel
  methods in system identification, machine learning and function estimation: A
  survey,'' \emph{Automatica}, vol.~50, no.~3, pp. 657--682, 2014.

\bibitem{goethals2003identification}
I.~Goethals, T.~Van~Gestel, J.~Suykens, P.~Van~Dooren, and B.~De~Moor,
  ``Identification of positive real models in subspace identification by using
  regularization,'' \emph{IEEE Transactions on Automatic Control}, vol.~48,
  no.~10, pp. 1843--1847, 2003.

\bibitem{khosravi2019positive}
M.~Khosravi and R.~S. Smith, ``Kernel-based identification of positive
  systems,'' \emph{IEEE Conference on Decision and Control}, pp. 1740--1745,
  2019.

\bibitem{sattar2020non}
Y.~Sattar and S.~Oymak, ``Non-asymptotic and accurate learning of nonlinear
  dynamical systems,'' \emph{arXiv:2002.08538}, 2020.

\bibitem{singh2019learning}
S.~Singh, S.~M. Richards, V.~Sindhwani, J.-J.~E. Slotine, and M.~Pavone,
  ``Learning stabilizable nonlinear dynamics with contraction-based
  regularization,'' \emph{arXiv:1907.13122}, 2019.

\bibitem{kaiser2018sparse}
E.~Kaiser, J.~N. Kutz, and S.~L. Brunton, ``Sparse identification of nonlinear
  dynamics for model predictive control in the low-data limit,''
  \emph{Proceedings of the Royal Society A}, vol. 474, no. 2219, p. 20180335,
  2018.

\bibitem{calinon2010learning}
S.~Calinon, F.~D'halluin, E.~L. Sauser, D.~G. Caldwell, and A.~G. Billard,
  ``Learning and reproduction of gestures by imitation,'' \emph{IEEE Robotics
  \& Automation Magazine}, vol.~17, no.~2, pp. 44--54, 2010.

\bibitem{khansari2011learning}
S.~M. Khansari-Zadeh and A.~Billard, ``Learning stable nonlinear dynamical
  systems with gaussian mixture models,'' \emph{IEEE Transactions on Robotics},
  vol.~27, no.~5, pp. 943--957, 2011.

\bibitem{khansari2017learning}
S.~M. Khansari-Zadeh and O.~Khatib, ``Learning potential functions from human
  demonstrations with encapsulated dynamic and compliant behaviors,''
  \emph{Autonomous Robots}, vol.~41, no.~1, pp. 45--69, 2017.

\bibitem{ijspeert2013dynamical}
A.~J. Ijspeert, J.~Nakanishi, H.~Hoffmann, P.~Pastor, and S.~Schaal,
  ``Dynamical movement primitives: learning attractor models for motor
  behaviors,'' \emph{Neural Computation}, vol.~25, no.~2, pp. 328--373, 2013.

\bibitem{sindhwani2018learning}
V.~Sindhwani, S.~Tu, and M.~Khansari, ``Learning contracting vector fields for
  stable imitation learning,'' \emph{arXiv:1804.04878}, 2018.

\bibitem{mazumder2019computational}
R.~Mazumder, A.~Choudhury, G.~Iyengar, and B.~Sen, ``A computational framework
  for multivariate convex regression and its variants,'' \emph{Journal of the
  American Statistical Association}, vol. 114, no. 525, pp. 318--331, 2019.

\bibitem{wang2019robust}
W.~Wang, P.~Yu, L.~Lin, and T.~Tong, ``Robust estimation of derivatives using
  locally weighted least absolute deviation regression,'' \emph{Journal of
  Machine Learning Research}, pp. 1--49, 2019.

\bibitem{aubin2012differential}
J.-P. Aubin and A.~Cellina, \emph{Differential inclusions: set-valued maps and
  viability theory}.\hskip 1em plus 0.5em minus 0.4em\relax Springer Science \&
  Business Media, 2012.

\bibitem{nesterov2018lectures}
Y.~Nesterov, \emph{Lectures on convex optimization}.\hskip 1em plus 0.5em minus
  0.4em\relax Springer, 2018.

\bibitem{Infinitedimensionalanalysis2006}
C.~D. Aliprantis and K.~C. Border, \emph{Infinite dimensional analysis: a
  hitchhiker's guide}.\hskip 1em plus 0.5em minus 0.4em\relax Springer, 2006.

\bibitem{tikhonov1977solutions}
A.~N. Tikhonov and V.~Y. Arsenin, ``Solutions of ill-posed problems,''
  \emph{New York}, pp. 1--30, 1977.

\bibitem{yuille2003concave}
A.~L. Yuille and A.~Rangarajan, ``The concave-convex procedure,'' \emph{Neural
  {C}omputation}, vol.~15, no.~4, pp. 915--936, 2003.

\end{thebibliography}
\end{document}